\theoremstyle{definition}
\newtheorem{theorem}{Theorem}
\newtheorem{assumption}{Assumption}
\newtheorem{definition}{Definition}
\newtheorem{lemma}{Lemma}
\newtheorem{example}{Example}
\newtheorem{remark}{Remark}
\title{Epsilon local rigidity and numerical algebraic geometry}
\author[1]{Andrew Frohmader}
\author[2]{Alexander Heaton}
\affil[1]{University of Wisconsin, Milwaukee}
\affil[2]{Max Planck Institute for Mathematics in the Sciences, Leipzig, and Technische Universit\"at Berlin}
\date{February 12, 2020}
\begin{document}

\maketitle

\begin{abstract}
A well-known combinatorial algorithm can decide generic rigidity in the plane by determining if the graph is of Pollaczek-Geiringer-Laman type. Methods from matroid theory have been used to prove other interesting results, again under the assumption of generic configurations. However, configurations arising in applications may not be generic. We present Theorem \ref{theorem:epsilon-local-rigidity} and its corresponding Algorithm \ref{algorithm:epsilon-local-rigidity} which decide if a configuration is $\varepsilon$-locally rigid, a notion we define. A configuration which is $\varepsilon$-locally rigid may be locally rigid or flexible, but any continuous deformations remain within a sphere of radius $\varepsilon$ in configuration space. Deciding $\varepsilon$-local rigidity is possible for configurations which are smooth or singular, generic or non-generic. We also present Algorithms \ref{algorithm:discrete-flex} and \ref{algorithm:infinitesimal-flex-parameter-homotopy} which use numerical algebraic geometry to compute a discrete-time sample of a continuous flex, providing useful visual information for the scientist.
\end{abstract}
Keywords: Numerical algebraic geometry, Real algebraic geometry, Rigidity, Kinematics, Mechanism Mobility, Homotopy Continuation. 2010 Mathematics Subject Classification: 70B15, 65D17, 14Q99.

\section{Introduction}\label{section:introduction}

Consider a graph with $n$ nodes labelled by $[n]=\{1,2,\dots,n\}$, and with edge set $E \subset \binom{[n]}{2}$. A map called the initial \textit{configuration} $p_0:[n] \to \mathbb{R}^d$ embeds this graph in some $\mathbb{R}^d$. 
We make precise definitions in Section \ref{section:preliminaries}, but the basic idea is to consider the edge distances $\ell_{ij}$ between nodes connected by an edge, but also the distances $\widehat{\ell_{ij}}$ between nodes that are not connected by an edge. If the nodes can move so that the distances $\ell_{ij}$ for $\{i,j\} \in E$ remain constant, but some distance $\widehat{\ell_{ij}}$ for $\{i,j\} \notin E$ changes, then we say the configuration $p_0$ is \textit{flexible}. If no such continuous motion exists, we say the graph is \textit{locally rigid}. We can always translate or rotate the graph inside $\mathbb{R}^d$, called a rigid motion, which keeps all pairwise distances constant. Thus, to decide local rigidity, we need to establish if there are deformations of the embedded graph \textit{besides the rigid motions}.

It is often difficult to determine conclusively whether a given configuration $p_0$ admits deformations which preserve edge-lengths, yet are not rigid motions (see \cite{A2009} where they show this problem is coNP-hard). Such deformations preserving the $\ell_{ij}$ but changing some non-edge length $\widehat{\ell_{ij}}$ are called \textit{flexes}.
We demonstrate these ideas with an historically important example (appearing in the first 3 competing patents for tensegrity \cite{JuanTur2008TensegrityFrameworksStaticReview}) which we call the 3-prism, whose rigidity is well-known. Any generic configuration of the 3-prism is infinitesimally rigid, and therefore locally rigid (see the well-known Theorem \ref{thm:FundamentalTheorem1} below).
But there are many non-generic configurations which lie on the singular locus of the polynomial system of member constraints. The left side of Figure \ref{fig:possible-flex-3prism} illustrates a sequence of configurations nearby such a singular $p_0$ appearing to deform the 3-prism.
These configurations were computed numerically using Algorithm \ref{algorithm:infinitesimal-flex-parameter-homotopy} below. Indeed, up to numerical tolerance, all the distances $\ell_{ij}$ between nodes connected by an edge are the same, yet several non-edge distances $\widehat{\ell_{ij}}$ change.
\begin{figure}[!htb]
    \centering
    \includegraphics[width=0.45\textwidth]{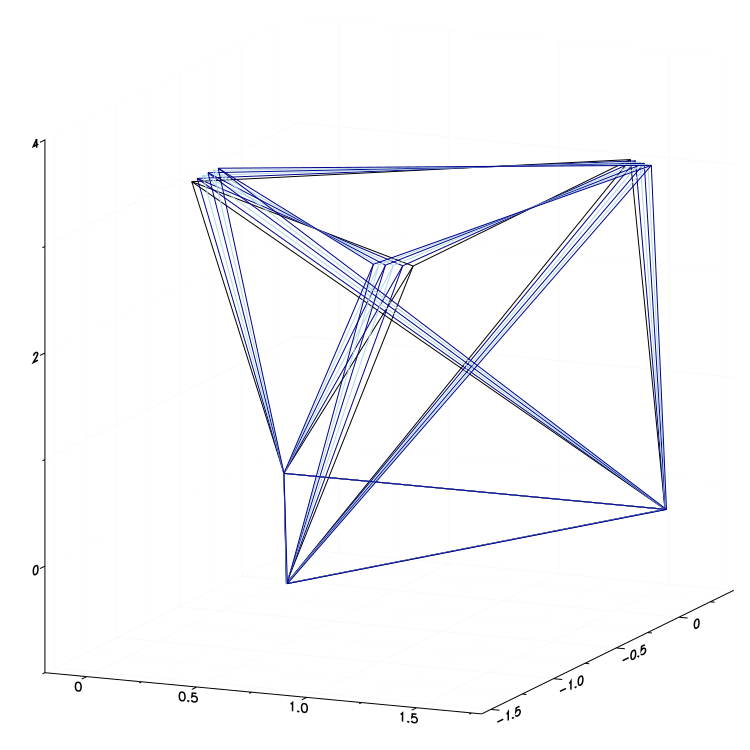} \includegraphics[width=0.45\textwidth]{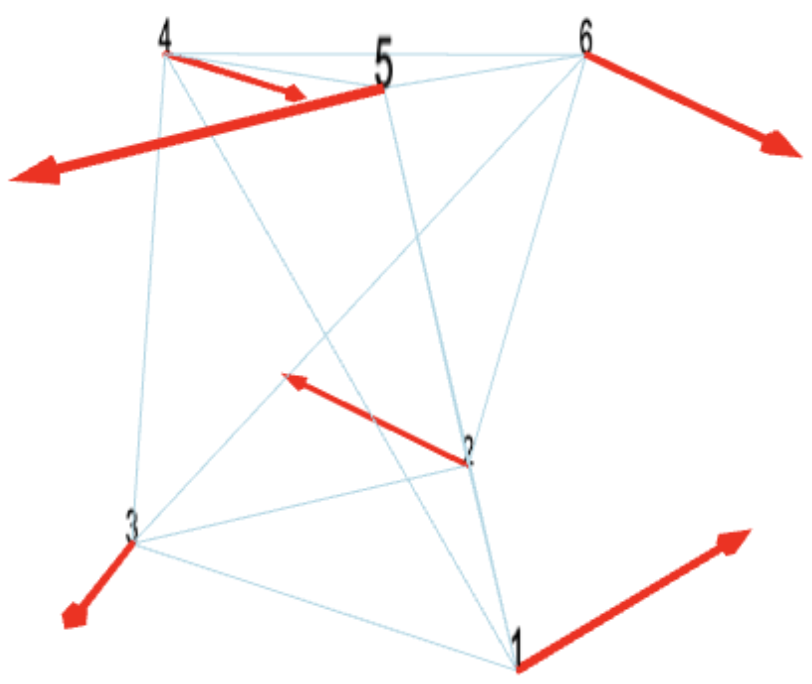}
    \caption{Numerical flex of the 3-prism}
    \label{fig:possible-flex-3prism}
\end{figure}
Theorem \ref{thm:FundamentalTheorem1} fails to apply since $p_0$ is singular and infinitesimal flexes exist (right side of Figure \ref{fig:possible-flex-3prism}). This infinitesimal flex was used as input to our Algorithm \ref{algorithm:infinitesimal-flex-parameter-homotopy}.
Given only this numerical evidence we may be tempted to think that $p_0$ is \textit{flexible}.
However, using our Theorem \ref{theorem:epsilon-local-rigidity} and Algorithm \ref{algorithm:epsilon-local-rigidity}, we can conclusively determine that the numerical configurations illustrated in Figure \ref{fig:possible-flex-3prism} are not part of a legitimate flex.
To be clear, our results do not prove local rigidity, but they prove something that may be good enough in practice, $\varepsilon$-local rigidity. In contrast to many methods and rules applied in rigidity theory, the methods presented here apply equally well to any configuration, be it exceptional or generic, singular or smooth.

Hence, the main results of this article are Theorem \ref{theorem:epsilon-local-rigidity} and its associated Algorithm \ref{algorithm:epsilon-local-rigidity}. These concern $\varepsilon$-local rigidity, which we depict in Figure \ref{fig:three-sphere-with-text}.
We make precise definitions in Section \ref{section:epsilon-local-rigidity}, but briefly, a configuration $p_0$ is $\varepsilon$-locally rigid if we can certify that any continuous flexes are extremely small, and \textit{cannot go far}.
Namely, we will certify that any continuous flex through $p_0$ stays within an $\varepsilon$-ball about $p_0$ within configuration space $\mathbb{R}^{nd}$.
\begin{figure}[!htb]
    \centering
    \includegraphics[width=0.9\textwidth]{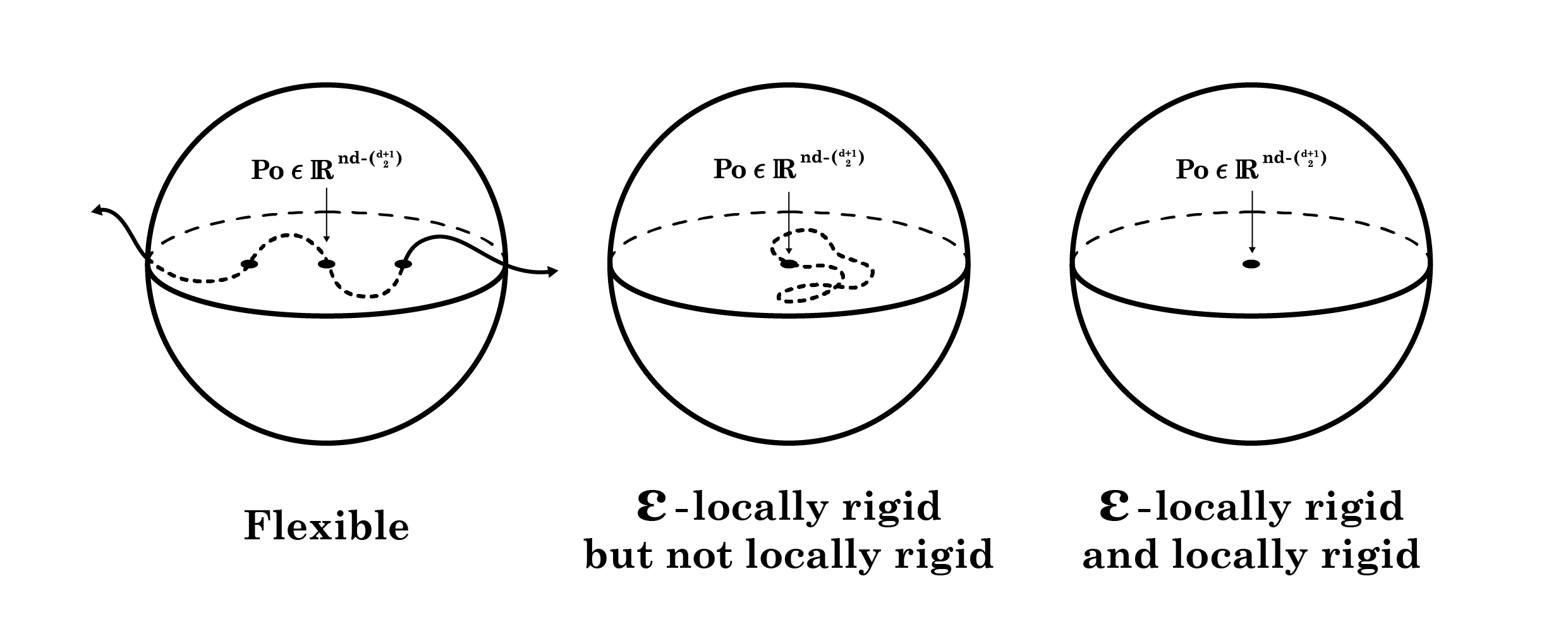}
    \caption{}
    \label{fig:three-sphere-with-text}
\end{figure}
Since the choice of $\varepsilon > 0$ is up to the user, knowing $\varepsilon$-local rigidity is practically as good as knowing local rigidity. Our methods use polynomial homotopy continuation and numerical algebraic geometry \cite{Wampler2013, SommeseWamplerTEXT}, taking advantage of theorems in real algebraic geometry \cite{AubryRouillierSafeyElDin2002, Hauenstein2012,  RouillierRoySafeyElDin2000, Seidenberg1954} which can guarantee finding real solutions to systems of polynomial equations, should they exist. In this way, our methods deal directly with the real algebraic set, applying equally well to both smooth and singular configurations. In many cases $\varepsilon$-local rigidity may be more relevant than local rigidity, since allowing some small movement may be acceptable for certain applications. Finally, our methods also imply Algorithms \ref{algorithm:discrete-flex} and \ref{algorithm:infinitesimal-flex-parameter-homotopy} which produce animations of a flex, should it exist, yielding easily-understandable visual information for the scientist. \\

\noindent \textbf{Related literature:} We briefly discuss some research related to this article. Since the simplest models are often the most useful, variations on this theme have been well-studied and therefore have many names: \textit{configuration, truss, bar-and-joint framework, tensegrity, linkage, assembly mode, structure, mechanism, mobility, degrees of freedom}, and more. The combinatorics community has studied this problem by assuming certain \textit{generic} conditions on $p_0$. This allows for theorems that use the graph structure alone (for overviews see \cite{CG2020, W1992} or Chapter 61 of \cite{GOT2018}), but requires generic assumptions. In contrast, the results of this paper apply to any configuration. The engineering and kinematics communities have also studied variations of this problem. In \cite{G2005}, a critical review of methods of mobility analysis is presented. The author enumerates 35 different approaches to calculating the mobility of a given configuration $p_0$, describing the limitations and outright failure of the methods in various cases. Again, in contrast, the results of this article apply to any configuration without assumptions. However, we cannot prove local rigidity, only $\varepsilon$-local rigidity.

The 2019 paper \cite{almostrigidity2019} defines the \textit{almost rigidity of frameworks}. This paper extracts information from the singular value decomposition of the rigidity matrix for a given configuration $p_0$, using existing tests for \textit{pre-stress stability} \cite{secondorderrigidity1980, higherorderrigidity1994} and semidefinite programming. Given a specific configuration $p_0$, if a certain list of conditions is met, they provide a specific radius $\eta_1 > 0$ of a sphere within which all continuous deformations of $p_0$ will remain. The size of $\eta_1$ depends on $p_0$. The key difference between their result and ours is that our methods allow a freely chosen $\varepsilon$ which can be adjusted according to the application. In contrast, the methods of \cite{almostrigidity2019} output a single radius $\eta_1$ computed from the SVD of the rigidity matrix for $p_0$.

\section{Preliminaries}\label{section:preliminaries}

This section collects the basic definitions and results from rigidity theory and numerical algebraic geometry needed for this article. For general references on combinatorial rigidity theory see \cite{CG2020, W1992} or Chapter 61 of \cite{GOT2018}. For numerical algebraic geometry see \cite{Wampler2013, SommeseWamplerTEXT}. We consider a connected graph $(V, E)$ with $n$ nodes and $m$ edges where $V = [n] =\{1,2,\dots,n\}$ and an edge is written either $\{i,j\} \in E$, or briefly $ij \in E$, where $i \in [n]$ and $j \in [n]$ correspond to nodes. The graph is embedded in $\mathbb{R}^d$ by the map $p_0:[n] \to \mathbb{R}^d$, called the \textit{initial configuration}. We assume that the affine span of the $n$ nodes is $d$ dimensional (they are not all contained in some lower-dimensional subspace). By slight abuse of notation we specify $p_0$ by a list of the $nd$ coordinates of its nodes, denoted by a tuple $(p_{ik}) \in \mathbb{R}^{nd}$, where $i \in [n]$ and $k \in [d]$, so that $p_{ik}$ is the $k$th coordinate of the $i$th node. A \textit{deformation} of $p_0$ is a continuous path $p(t):[0,1] \to \mathbb{R}^{nd}$ which recovers $p_0$ at $t=0$. A \textit{rigid motion} is a deformation which preserves the pairwise squared distances
\begin{equation}\label{equation:rigid-motion-squared-distances}
    \left\{ \sum_{k=1}^d (p_{ik}(t) - p_{jk}(t) )^2 \right\}_{ij \in \binom{[n]}{2}},
\end{equation}
at every time $t$, where $ij \in \binom{[n]}{2}$ runs over all pairs of nodes, not just the nodes that are connected by an edge $ij \in E$. We associate a system of polynomial equations called the \textit{member constraints} to our embedded graph which enforce the requirement that edges must have constant length.

\begin{definition}\label{definition:member-constraints}
Let $g: \mathbb{C}^{nd} \to \mathbb{C}^m$ be given by
  \begin{align*}
    g(x) &= \begin{bmatrix}
           g_{1}(x) \\
           g_{2}(x) \\
           \vdots \\
           g_{m}(x)
         \end{bmatrix},
  \end{align*}
where $g_l: \mathbb{C}^{nd} \to \mathbb{C}$ gives the difference in squared length of the $l^{th}$ edge from that of the initial configuration $p_0$. If the $l^{th}$ edge is $ij$, then
\begin{equation}\label{equation:member-constraints}
    g_l(x) = \sum_{k=1}^d (x_{ik} - x_{jk})^2 - \sum_{k=1}^d (p_{ik}-p_{jk})^2.
\end{equation}
\noindent The system of \textit{member constraints} associated to $p_0$ is the system of polynomial equations given by $g(x) = 0$. We will also need the corresponding algebraic set $V(g) := \{ x \in \mathbb{C}^{nd} \, : \, g(x) = 0 \}$ and real algebraic set $V_{\mathbb{R}}(g) := V(g) \cap \mathbb{R}^{nd}$.
\end{definition}

The real algebraic set $V_{\mathbb{R}}(g)$ is the \textit{configuration space}. We can now state the key definition.

\begin{definition}\label{definition:flex}\label{definition:local-rigidity}
A \textit{flex} of $p_0$ is a deformation $p(t):[0,1] \to \mathbb{R}^{nd}$ such that $g(p(t)) = 0$ for all $t \in [0,1]$ and which is not a rigid motion. The configuration $p_0$ is called \textit{locally rigid} if no flex exists.
\end{definition}

The first and easiest way to decide whether a given configuration $p_0$ is locally rigid is to examine the linearization of the polynomial map $g$ of Definition \ref{definition:member-constraints}. The Jacobian of $g$, denoted $dg$, is an $m \times nd$ matrix of polynomials. The matrix $dg$ is usually called the rigidity matrix. When we evaluate $dg$ at the point $p_0 \in V_\mathbb{R}(g)$, the vectors in its right nullspace $Null(dg|_{p_0})$ are sometimes called \textit{infinitesimal mechanisms}, for example in \cite{S2007}. The Lie algebra of the Euclidean group of rigid motions produces $\binom{d+1}{2}$ linearly independent infinitesimal mechanisms, whose span we denote $RM$. Any remaining vectors outside the span of the infinitesimal rigid motions are called \textit{infinitesimal flexes}, giving rise to the following decomposition:
\begin{equation}\label{equation:decomposition-of-nullspace}
    Null(dg|_{p_0}) = RM \oplus F,
\end{equation}
where any $v \notin RM$ is an infinitesimal flex. Any configuration $p_0$ for which $F=0$ is called \textit{infinitesimally rigid}. The good news comes in the form of the well-known theorem
\begin{theorem}[\cite{AsimowRoth1979}]\label{thm:FundamentalTheorem1}
Infinitesimal rigidity implies local rigidity.
\end{theorem}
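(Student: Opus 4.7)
My plan is to embed the problem in a slightly larger system whose level set is manifestly the orbit of rigid motions, and then use a constant-rank argument to show the two level sets agree locally. Define the enlarged squared-distance map $f:\mathbb{R}^{nd}\to\mathbb{R}^{\binom{n}{2}}$ sending $x$ to the list of \emph{all} pairwise squared distances (not just those on edges), and set $h(x) = f(x) - f(p_0)$. Then $g$ is a subsystem of $h$, so $V_{\mathbb{R}}(h) \subseteq V_{\mathbb{R}}(g)$. Moreover, because the nodes of $p_0$ affinely span $\mathbb{R}^d$ (an assumption stated in Section \ref{section:preliminaries}), any $x$ with $h(x)=0$ is congruent to $p_0$ in the classical sense, so $V_{\mathbb{R}}(h)$ coincides with the Euclidean orbit of $p_0$. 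Hence local rigidity of $p_0$ is equivalent to the statement that $V_{\mathbb{R}}(g) = V_{\mathbb{R}}(h)$ in a neighborhood of $p_0$.

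Next I would match the ranks of $dg$ and $dh$ near $p_0$. Writing $r := nd - \binom{d+1}{2}$, the subspace $RM$ of infinitesimal rigid motions always sits inside $\mathrm{Null}(dg|_x)$ and has dimension exactly $\binom{d+1}{2}$ for any $x$ whose nodes affinely span $\mathbb{R}^d$; such spanning persists on a neighborhood $U$ of $p_0$, so $\mathrm{rank}(dg|_x) \leq r$ for $x \in U$. By the infinitesimal rigidity hypothesis, $\mathrm{Null}(dg|_{p_0}) = RM$, so $\mathrm{rank}(dg|_{p_0}) = r$. Since rank is lower semicontinuous, after shrinking $U$ we get $\mathrm{rank}(dg|_x) = r$ for every $x \in U$. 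The same reasoning, together with the fact that the full pairwise-distance vector determines a configuration up to rigid motion, gives $\mathrm{rank}(dh|_x) = r$ on $U$ as well.

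With constant rank $r$ on $U$ established for both $g$ and $h$, the constant-rank theorem implies that $V_{\mathbb{R}}(g) \cap U$ and $V_{\mathbb{R}}(h) \cap U$ are smooth submanifolds of dimension $nd - r = \binom{d+1}{2}$ passing through $p_0$. Since $V_{\mathbb{R}}(h) \cap U \subseteq V_{\mathbb{R}}(g) \cap U$ and both manifolds contain $p_0$ and share the same dimension, they coincide on a (possibly smaller) neighborhood of $p_0$. Consequently any continuous path $p(t)$ with $g(p(t))=0$ and $p(0)=p_0$ stays inside the Euclidean orbit of $p_0$ for small $t$ and is therefore a rigid motion, contradicting the existence of a flex. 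I expect the main obstacle to be the constant-rank step: one must verify carefully that the upper bound $\mathrm{rank}(dg|_x) \leq r$ persists in a neighborhood, which hinges on the affine-spanning condition remaining valid under small perturbations and on ruling out any accidental rank jump. The combination of lower semicontinuity with the $RM$-based upper bound is what makes this step clean.
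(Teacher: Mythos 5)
The paper does not actually prove Theorem~\ref{thm:FundamentalTheorem1}; it cites it from Asimow and Roth. Your argument is, in substance, the standard proof from that source: introduce the full pairwise-distance map $h$, identify $V_{\mathbb{R}}(h)$ with the Euclidean orbit of $p_0$, pin down constant rank $r = nd - \binom{d+1}{2}$ for both $dg$ and $dh$ on a neighborhood (the upper bound because the $\binom{d+1}{2}$-dimensional subspace of infinitesimal rigid motions lies in the null space as long as affine spanning persists, the lower bound from lower semicontinuity plus the hypothesis, and $\mathrm{rank}\, dh \ge \mathrm{rank}\, dg$ because $dg$ is a row-submatrix of $dh$), and then invoke the constant-rank theorem to see the two level sets are submanifolds of the same dimension, one contained in the other, hence locally equal. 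That rank bookkeeping is sound.

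There is, however, a gap in your last sentence. You conclude that a deformation $p(t)$ with $g(p(t))=0$ ``stays inside the Euclidean orbit of $p_0$ for small $t$ and is therefore a rigid motion.'' Under Definition~\ref{definition:flex}, a rigid motion must preserve all pairwise distances on the entire interval $[0,1]$; a path could coincide with the orbit near $t=0$, leave it at some later time while still satisfying $g \equiv 0$, and such a path is a flex by the paper's definition, so your local conclusion does not yet rule it out. The standard repair is an open-and-closed argument: infinitesimal rigidity is equivariant under the Euclidean group (if $q = r \cdot p_0$ for a rigid motion $r$ then $dg|_q$ and $dg|_{p_0}$ have equal rank), so your constant-rank conclusion holds at \emph{every} point of $V_{\mathbb{R}}(h)$, making $V_{\mathbb{R}}(h)$ open in $V_{\mathbb{R}}(g)$. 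Since the Euclidean action on an affinely spanning configuration is proper, $V_{\mathbb{R}}(h)$ is also closed in $\mathbb{R}^{nd}$ and hence in $V_{\mathbb{R}}(g)$. Therefore $\{ t \in [0,1] : p(t) \in V_{\mathbb{R}}(h) \}$ is open, closed, nonempty, hence all of $[0,1]$, and $p$ is a rigid motion. With that patch your proof is complete. (A minor overreach: the claimed equivalence of local rigidity with $V_{\mathbb{R}}(g) = V_{\mathbb{R}}(h)$ near $p_0$ can fail in the uninteresting direction, but you only use the direction that holds.)
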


Thus if $\text{rank}\big( dg|_{p_0}\big) = nd - \binom{d+1}{2}$ then our configuration is \textit{infinitesimally rigid}, and therefore also \textit{locally rigid}. Crucially, the converse is false - a configuration may have infinitesimal flexes but still be locally rigid. Other techniques are required to determine if infinitesimal flexes are actually realizable. At most points $p \in V(g)$ the rank of the Jacobian will be equal to the \textit{generic rank} of the polynomial map $g$ \cite[Section 13.4]{SommeseWamplerTEXT}. But at \textit{singular points} $p_0$ the Jacobian $dg|_{p_0}$ will drop rank, admitting more infinitesimal mechanisms.  For the 3-prism, an infinitesimal flex vector $v \in \mathbb{R}^{nd}$ with $dg|_{p_0}(v) = 0$ is pictured on the right of Figure \ref{fig:possible-flex-3prism} by arranging its $nd = 18$ components as $3$-vectors attached to each of the $n=6$ nodes.

From the discussion above, we see that infinitesimal rigidity is a sufficient condition for local rigidity but is not necessary. The complete picture comes from the local geometry of the configuration space $V_\mathbb{R}(g)$ around $p_0$.  A flex is nothing more than a path through configuration space starting at $p_0$ which is not a rigid motion. The configuration $p_0$ is locally rigid exactly when the local real dimension of $V_{\mathbb{R}}(g)$ is $\binom{d+1}{2}$, the dimension of the Euclidean group of rigid motions. See \cite{WHS2011} for more details. Thus, a technique for determining the local real dimension of an algebraic set would completely solve the problem of determining local rigidity. No such technique currently exists. 

A partial answer to the problem of determining local real dimension comes from the local dimension test described in \cite{WHS2011}. With minor caveats, the local dimension test determines the local complex dimension successfully. The real and complex local dimensions agree whenever $p_0$ is a smooth point on the algebraic set. Thus, for smooth points \cite{WHS2011} solves the local rigidity problem. However, many of the examples we are interested in are singular configurations. For singular $p_0$ like the 3-prism of Figure \ref{fig:possible-flex-3prism}, the real local dimension may differ from the complex local dimension computed in \cite{WHS2011}. The authors acknowledge this point and leave it open for future work. 

To understand the local geometry of $V_\mathbb{R}(g)$ around $p_0$ we rely on tools from \textit{numerical algebraic geometry} - specifically polynomial homotopy continuation. Given a square system of polynomial equations (square means same number of variables as equations), polynomial homotopy continuation can reliably compute \textit{all isolated solutions}, in contrast to other numerical methods like Newton iteration which can only reliably compute \textit{some solutions}. Numerical algebraic geometry draws on results from algebraic geometry to provide such guarantees. For general references see any of \cite{Wampler2013, Hauenstein2017, SommeseWamplerTEXT}. Here we describe only the basics necessary to understand the results of this article.

Given a system $g:\mathbb{C}^N \to \mathbb{C}^N$ whose solutions we would like to discover, the key is to produce a \textit{starting system} $f:\mathbb{C}^N \to \mathbb{C}^N$ whose solutions are known, and a \textit{homotopy} $H(x,t):\mathbb{C}^N \times \mathbb{C} \to \mathbb{C}^N$ such that $H(x,1)=f(x)$ and $H(x,0)=g(x)$. The key computational step is to solve a \textit{Davidenko} initial value problem for the system of ODE's resulting from differentiating $H(x(t),t) = 0$ with respect to time. Here, $x(t)$ is the smooth path of some known solution $f(x(1))=0$ to some unknown solution $g(x(0)) = 0$. Solving the ODE from the initial conditions at $H(x(1),1)=0$ to the \textit{endpoint} \cite[Chapter 10]{SommeseWamplerTEXT} at $t=0$ using any of the standard ODE solvers is called \textit{path-tracking} \cite[Section 2.3]{SommeseWamplerTEXT}. For excellent examples see the website of the \texttt{julia} implementation \texttt{HomotopyContinuation.jl}, which we use in all our examples \cite{BT2018}.

A \textit{total-degree homotopy} always provides a valid starting system \cite[Section 8.4.1]{SommeseWamplerTEXT}. In many cases there are smaller starting systems which require tracking less paths, and hence less computation \cite[Section 8.4, 8.5]{SommeseWamplerTEXT}. We demonstrate this for the 3-prism in Example \ref{example:three-prism} later. In addition to finding isolated solutions, the concept of \textit{witness sets} can be used to understand positive-dimensional components of the solution set to polynomial systems \cite[Chapter 13]{SommeseWamplerTEXT}. Say $X$ is a $k$-dimensional irreducible component of the solution set to $g:\mathbb{C}^N \to \mathbb{C}^m$. A witness set for $X$ is a tuple $(g,L,W)$ where $g$ is the polynomial system, $L$ is a linear space of dimension $N-k$, and $W$ is a list of the finitely many isolated solutions obtained by intersecting $X$ with the linear space $L$. Once a witness set for $X$ has been computed, one can easily sample more points from $X$ by perturbing $L$ to nearby linear spaces $L'$, following the solutions in $W$. The size $|W|$ is the \textit{degree} of $X$. Numerical algebraic geometry is a powerful tool that we use in this article.

\section{Changing coordinates to a moving frame}\label{section:pinned-nodes-vs-set-locations}

Our goal in this section is to remove rigid motions without changing the space of flexes. Often, when building structures, you attach or fasten them to the ground, or to the wall, or to another structure. In that case, the correct model will treat those nodes as fixed, and not allow them to move at all. This deletes node variables from the equations $g$ and columns from the Jacobian matrix $dg$. All our results still apply, though the change of coordinates we describe presently would not be necessary.

However, if you are considering a structure which you do not intend to fasten to the ground, like NASA's Super Ball Bot of Figure \ref{fig:NASA-super-ball-bot},
then the correct model does not have variables deleted from the system $g$. We would like handle both situations equally well.
\begin{figure}[!htb]
    \centering
    \includegraphics[width=0.3\textwidth]{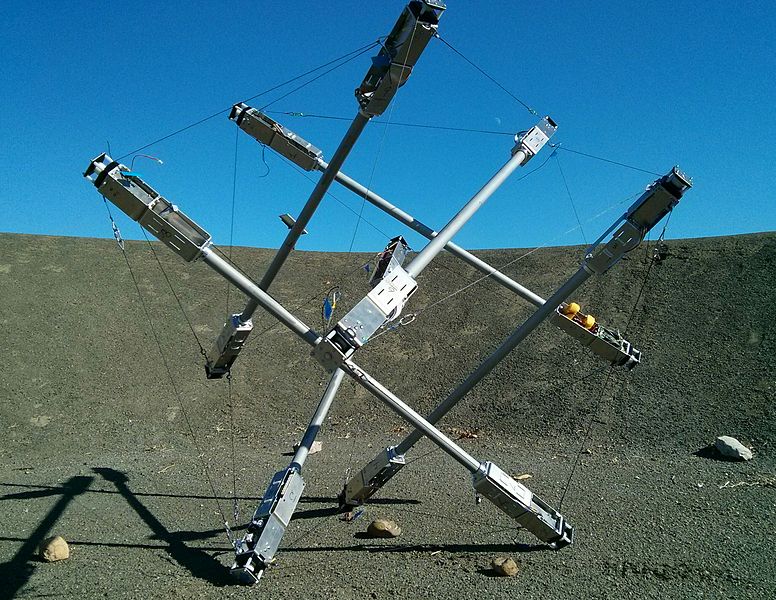}
    \caption{Structure with no nodes fastened to the ground}
    \label{fig:NASA-super-ball-bot}
\end{figure}
With no nodes fixed, the configuration space includes the full $\binom{d+1}{2}$ dimensional space of rigid motions. These rigid motions are independent of the structure being studied and are not interesting. We would like to remove them so we can focus on the flexes. What follows is a convenient procedure for removing the rigid motions. This can be viewed as a quotient construction where we work with flex representatives after modding out by rigid motions.

Observe that given a valid configuration $(x_{ik}) \in \mathbb{R}^{nd}$ which satisfies the member constraints $g$, any sequence of rigid motions applied to $(x_{ik})$ will yield  another valid configuration $(\widehat{x_{ik}})$. We use this to choose a convenient reference frame for our initial configuration and stay with this reference frame as we follow a flex.

As an example, we illustrate our choice of reference frame for a configuration embedded in $\mathbb{R}^3$. Let nodes 1,2,3 be noncollinear. Translate the reference frame until node 1 is at the origin. Next, rotate the reference frame until node 2 lies on the $x$-axis. Finally, rotate the reference frame about the $x$-axis until node 3 has $z$-coordinate zero. The resulting configuration has $\text{node 1} = (0,0,0)$, $\text{node 2} = (x_{21}, 0,0)$, and $\text{node 3} = (x_{31}, x_{32}, 0)$. We require a flex to stay in this reference frame. That is, $x_{11} = x_{12} = x_{13} = x_{22} = x_{23} = x_{33} = 0$ at all points along a path through $V_\mathbb{R}(g)$. We claim these constraints remove the rigid motions but do not change the space of flexes. To see this, note any valid configuration $(x_{ik})$ can be sent, via rigid motions, to another configuration $(\widehat{x_{ik}})$ which satisfies the above constraints. We refer to this change of coordinates as the moving frame. The general case is presented in the following two theorems.

\begin{theorem}\label{theorem:setting-locations}
Let the affine span of nodes $1,\dots,d$ be $d-1$ dimensional. Then there exists a rigid motion which changes coordinates $p_{ik}$ to $\widehat{p_{ik}}$ such that for $i \in [d]$ we have $\widehat{p_{ik}} = 0$ if $k \geq i$. We denote the new configuration $\widehat{p_0}$.
\end{theorem}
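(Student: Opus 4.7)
The plan is to construct the required rigid motion as a composition of a translation followed by an orthogonal transformation built by Gram--Schmidt (equivalently, a QR decomposition). First I would translate by $-p_1$ so that node $1$ moves to the origin; this immediately gives $\widehat{p_{1k}} = 0$ for all $k \in [d]$, handling the row $i = 1$ of the target pattern. After this translation, denote by $v_j$ the position vector of node $j$ for $j \in \{2,\dots,d\}$. The hypothesis that the affine span of nodes $1,\dots,d$ has dimension $d-1$ is equivalent to saying $v_2, \dots, v_d$ are linearly independent in $\mathbb{R}^d$.

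Next I would apply Gram--Schmidt in order to $v_2, v_3, \ldots, v_d$ to produce an orthonormal list $u_1, u_2, \ldots, u_{d-1}$ with the nesting property $\operatorname{span}(u_1, \dots, u_{j-1}) = \operatorname{span}(v_2, \dots, v_j)$ for each $j \in \{2, \dots, d\}$. Then pick a unit vector $u_d$ orthogonal to $\operatorname{span}(u_1, \dots, u_{d-1})$, choosing its sign so that the matrix $Q = [u_1 \mid u_2 \mid \cdots \mid u_d]$ has determinant $+1$ and hence $Q \in SO(d)$. The rigid motion $x \mapsto Q^\top(x - p_1)$ is the one I claim works: since $v_j \in \operatorname{span}(u_1, \dots, u_{j-1})$ for each $j \in \{2,\dots,d\}$, the rotated vector $Q^\top v_j$ lies in $\operatorname{span}(e_1, \dots, e_{j-1}) \subset \mathbb{R}^d$, so its $k$th coordinate vanishes whenever $k \geq j$. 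Combined with node $1$ being at the origin, this is exactly the target condition $\widehat{p_{ik}} = 0$ for $k \geq i$.

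The only step that needs genuine verification is that Gram--Schmidt does not stall, that is, that at each stage the residual $v_{j} - \sum_{l < j} \langle v_j, u_{l-1}\rangle u_{l-1}$ is nonzero. This is immediate from the linear independence of $v_2, \dots, v_d$, which is in turn equivalent to the $(d-1)$-dimensional affine span assumption. So this is really the only place the hypothesis is used, and it is used in the cleanest possible way. Everything else (the translation is a rigid motion, elements of $SO(d)$ are rigid motions, composing rigid motions gives a rigid motion) is standard.

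I do not anticipate a genuine obstacle in this proof; the content is essentially the observation that the iterative reference-frame adjustment described informally in the text (translate node $1$ to the origin, rotate node $2$ onto an axis, rotate about that axis to put node $3$ in a plane, and so on) is the same construction as QR applied to the matrix $[v_2 \mid \cdots \mid v_d]$. The write-up I would give is therefore quite short: one paragraph to set up the translation and the vectors $v_j$, one paragraph to perform Gram--Schmidt and verify the zero pattern, and a brief remark ensuring that the resulting orthogonal matrix can be chosen in $SO(d)$.
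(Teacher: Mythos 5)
Your proof is correct, and it is essentially the paper's proof reorganized: the paper translates node~$1$ to the origin and then inductively rotates each subsequent node into place using a rotation in the subgroup of $SO(d)$ fixing the span of the earlier coordinate axes, which is the Givens-rotation route to exactly the QR factorization you construct in one pass via Gram--Schmidt. The only cosmetic differences are that you assemble the orthogonal matrix $Q$ all at once rather than as a product of elementary rotations, and that you are slightly more explicit about fixing the sign of the final column $u_d$ so that $Q \in SO(d)$, a point the paper handles implicitly by working inside $SO$ at every inductive step.
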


\begin{proof}
We proceed by induction on the number of nodes $j$. Let the dimension of ambient space be fixed at $d$. If $j=1$, apply a translation that moves node $1$ to the origin. Then its new coordinates satisfy $\widehat{p_{1k}} = 0$ if $k \geq 1$ (all its coordinates are zero). Now say Theorem \ref{theorem:setting-locations} holds for $j=d-1$. We find a rigid motion that fixes nodes $1, \dots ,d-1$ and rotates node $d$ such that $p_{dd} = 0$. Let $H$ be the subspace of $\mathbb{R}^d$ spanned by the first $d-2$ coordinate axes and  $K$ be the subgroup of $SO(\mathbb{R}^d)$ that fixes $H$. By induction, nodes $1, \dots, d-1$ are contained in $H$. $K$ is isomorphic to $SO(\mathbb{R}^2)$ so we can select a rotation $r \in K$ that fixes indices $1, \dots , d-2$ of every node and rotates node $d$ such that $p_{dd} = 0$ as desired.
\end{proof}

\begin{remark}\label{remark:always-d-incident-edges}
We view $\widehat{p_0}$ as an element of $\mathbb{R}^N$ where $N = nd - \binom{d+1}{2}$ by dropping the newly zero coordinates.
\end{remark}

\begin{definition}\label{definition:set-location-member-constraint-equations}
We associate to $\widehat{p_0}$ a system of equations called the \textit{moving frame member constraints} denoted
\begin{equation*}
    \widehat{g}:\mathbb{C}^N \to \mathbb{C}^{m}.
\end{equation*}
As with $g$, this system enforces the requirement that edges must have constant length and its definition is analogous to that of $g$. We simply drop the unnecessary variables.
\end{definition}

\begin{theorem}\label{theorem:setting-locations-preserves-flexes}
There exists a flex $p:[0,1] \to \mathbb{R}^{nd}$ of the initial configuration $p_0$ if and only if there exists a flex $\widehat{p(t)}:[0,1] \to \mathbb{R}^N$ of the initial configuration $\widehat{p_0}$.
\end{theorem}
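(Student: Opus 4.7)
The plan is to exhibit an explicit correspondence between flexes of $p_0$ in $\mathbb{R}^{nd}$ and flexes of $\widehat{p_0}$ in $\mathbb{R}^N$, mediated by a continuous time-varying rigid change of frame. The construction of Theorem \ref{theorem:setting-locations}, applied pointwise along a path, produces a family of rigid motions $R(t)$ whose composition with a flex of $p_0$ is a flex in the moving frame, and whose inverse lets us pull back flexes in the other direction.

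For the forward direction, let $p(t):[0,1]\to\mathbb{R}^{nd}$ be a flex of $p_0$. At each time $t$, I mimic the inductive procedure from the proof of Theorem \ref{theorem:setting-locations}: translate so that node $1$ lies at the origin; rotate so that node $2$ lies on the positive first coordinate axis; rotate within the stabilizer of that axis so that node $3$ acquires vanishing $d$-th coordinate; and so on until the moving frame conditions $\widetilde{p}_{ik}(t)=0$ for $k\ge i$, $i\in[d]$, all hold. Setting $\widetilde{p}(t):=R(t)\cdot p(t)$ and dropping the identically zero coordinates yields $\widehat{p(t)}:[0,1]\to\mathbb{R}^N$. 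Three checks remain. First, $\widehat{p(0)}=\widehat{p_0}$ by the uniqueness of the moving frame construction at $t=0$. Second, $\widehat{g}(\widehat{p(t)})=0$ for all $t$, since rigid motions preserve every pairwise squared distance, so $g(\widetilde{p}(t))=g(p(t))=0$, and $\widehat{g}$ is obtained from $g$ by suppressing the pinned variables. Third, $\widehat{p(t)}$ is not a rigid motion: since $p(t)$ is not, some non-edge distance $\sum_k (p_{ik}(t^*)-p_{jk}(t^*))^2$ differs from its value at $t=0$ for some $t^*$; this same pairwise distance is preserved by $R(t)$, so the discrepancy persists in $\widehat{p(t)}$.

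For the reverse direction, given a flex $\widehat{p(t)}$ of $\widehat{p_0}$, I would reinsert the zero coordinates to view it as a path $\widetilde{p}(t):[0,1]\to\mathbb{R}^{nd}$ with $\widetilde{p}(0)=\widehat{p_0}$ and $g(\widetilde{p}(t))=0$. Applying the inverse of the single rigid motion $R_0$ from Theorem \ref{theorem:setting-locations} that takes $p_0$ to $\widehat{p_0}$ produces $p(t):=R_0^{-1}\widetilde{p}(t)$, a continuous path starting at $p_0$ and satisfying $g(p(t))=0$; the same preservation-of-distances argument shows it is not a rigid motion, hence a flex of $p_0$.

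The principal obstacle is ensuring that $R(t)$ exists and is continuous on the whole interval. The construction requires that the affine span of $p_1(t),\dots,p_d(t)$ remain $(d-1)$-dimensional, an open condition holding at $t=0$ and hence on some initial subinterval $[0,\delta]$. On that subinterval each step of the procedure (the translation by $-p_1(t)$, and each subsequent rotation determined by the vanishing of a single coordinate) depends continuously on the configuration and therefore on $t$. If some non-edge distance has already varied by time $\delta$, restricting the flex to $[0,\delta]$ and reparameterizing gives what we need. Otherwise $p|_{[0,\delta]}$ is itself a rigid motion, and pre-composing $p$ with its inverse while shifting the parameter repositions the non-trivial portion of the flex into a regime where the moving frame is well-defined at the new base point, reducing to the previous case.
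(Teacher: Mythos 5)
Your proof follows essentially the same route as the paper's: apply the one-time rigid motion $R_0$ to pull flexes of $\widehat{p_0}$ back to flexes of $p_0$, and apply a time-varying rigid motion $R(t)$ (constructed pointwise by Theorem~\ref{theorem:setting-locations}) to push flexes of $p_0$ forward to flexes of $\widehat{p_0}$. The reverse direction and the ``rigid motions preserve all pairwise distances, hence preserve the property of not being a rigid motion'' check both match the paper.

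What you add, and what the paper omits, is the observation that $R(t)$ is only well-defined where the affine span of nodes $1,\dots,d$ stays $(d-1)$-dimensional; the paper simply applies Theorem~\ref{theorem:setting-locations} ``for every point $p(t)$'' and asserts continuity of $r(t)$ without checking the hypothesis holds along the path. You are right to flag this. However, your patch is looser than it needs to be. Your dichotomy is: choose some $\delta$ with full affine span on $[0,\delta]$; either a non-edge distance has already changed (restrict and reparameterize), or $p|_{[0,\delta]}$ is a rigid motion (pre-compose with its inverse, shift, ``reduce to the previous case''). The second branch does not obviously reduce to the first: after pre-composing and shifting, the new flex can again have a rigid-motion prefix, and without further argument this could iterate indefinitely. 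A cleaner resolution: let $T=\sup\{t : p|_{[0,t]} \text{ is a rigid motion}\}$. Since $p$ is a flex, $T<1$; by continuity $p|_{[0,T]}$ is a rigid motion, so $p(T)$ is congruent to $p_0$ and therefore has full affine span; by openness the affine span stays full on $[T,T+\delta]$ for some $\delta>0$, and by maximality of $T$, $p|_{[T,T+\delta]}$ is not a rigid motion. Restricting to $[T,T+\delta]$, reparameterizing, and applying the moving frame (which sends $p(T)$ to $\widehat{p_0}$ since congruent configurations have the same moving-frame representative) then gives a flex of $\widehat{p_0}$ in one step, with no regress. You should also note, as a secondary point, that the continuity of $t\mapsto R(t)$ requires fixing sign conventions in the rotation steps (e.g.\ requiring $\widehat{p}_{i,i-1}>0$); with such a convention the construction is a continuous local section over the open set where the affine span condition holds, but neither you nor the paper spells this out.
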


\begin{proof}
Order the nodes such that nodes $1, \dots d$ span $d-1$ dimensions in $\mathbb{R}^{nd}$. Let $\pi:\mathbb{R}^{nd} \to \mathbb{R}^N$ be the natural projection and $\iota: \mathbb{R}^N \to \mathbb{R}^{nd}$ be the natural injection. Let $r$ be the rigid motion of Theorem \ref{theorem:setting-locations} such that $\pi (r \cdot p_0) = \widehat{p_0}$. First, say we have a flex $\widehat{p(t)}:[0,1] \to \mathbb{R}^N$. Then $r^{-1} \cdot \iota(\widehat{p(t)})$ is a flex of $p_0$ in $\mathbb{R}^{nd}$. Now suppose we have a flex $p:[0,1] \to \mathbb{R}^{nd}$. By Theorem \ref{theorem:setting-locations} for every point $p(t)$, there exists a rigid motion $r(t)$ which sends $p(t)$ to another point with $\widehat{p(t)}_{ik} = 0$ if $k \geq i$ and $i \in [d]$. The continuity of $p(t)$ implies $r(t)$ is continuous. Thus $\pi(r(t) \cdot p(t))$ is a flex of $\widehat{p_0}$.
\end{proof}

For interested readers, we make note of the connection to the method of \textit{moving frames} \cite{OlverMovingFrames}. Understood in this sense, we are making our calculations in a \textit{coordinate cross-section}.

\section{Epsilon local rigidity}\label{section:epsilon-local-rigidity}

In this section we determine whether a given initial configuration $p_0$ is $\varepsilon$-locally rigid. We fix $N = nd - \binom{d+1}{2}$. It is tempting to define a notion of $\varepsilon$-locally rigid in the original configuration space $\mathbb{R}^{nd}$ but since any configuration can be translated by an arbitrary amount, the better notion is applied to the point after changing to the moving frame $\widehat{p_0} \in \mathbb{R}^N$ as in Theorem \ref{theorem:setting-locations}. This motivates the following

\begin{definition}\label{definition:epsilon-locally-rigid}
Let $p_0$ be an initial configuration and $\widehat{p_0}$ be the configuration in the moving frame of Theorem \ref{theorem:setting-locations}. We say that $p_0$ is \textit{$\varepsilon$-locally rigid} if every flex $\widehat{p(t)}$ of $\widehat{p_0}$ satisfies $\widehat{p(t)} \in B_{\varepsilon}(\widehat{p_0})$ for all $t \in [0,1]$, where $B_{\varepsilon}(\widehat{p_0})$ is the open $\varepsilon$-ball centered at $\widehat{p_0}$.
\end{definition}

\begin{remark}\label{remark:epsilon-locally-rigid-meaning}
In other words, if $p_0$ is $\varepsilon$-locally rigid, then any positive-dimensional connected component of the real algebraic set $V_{\mathbb{R}}(\widehat{g})$ containing $\widehat{p_0}$ stays within some ball about $\widehat{p_0}$. Any flex that may exist can be safely ignored if $\varepsilon$ is sufficiently small. For all practical purposes, it is as if $p_0$ is locally rigid.
\end{remark}

We now move towards Theorem \ref{theorem:epsilon-local-rigidity} and Algorithm \ref{algorithm:epsilon-local-rigidity} to decide whether a configuration $p_0$ is $\varepsilon$-locally rigid.

\begin{definition}\label{definition:epsilon-member-constraint-system}
Let $\widehat{g} = [\widehat{g_1},\dots,\widehat{g_m}]^T$ be the moving frame member constraints associated to $\widehat{p_0}$ as in Definition \ref{definition:set-location-member-constraint-equations}. Define the polynomial system $\widehat{g}_\varepsilon: \mathbb{C}^N \to \mathbb{C}$
\begin{equation*}
    \widehat{g}_{\varepsilon} = \widehat{g_1}^2 + \cdots + \widehat{g_m}^2 + s_{\varepsilon}^2,
\end{equation*}
where $s_{\varepsilon}$ is defined by
\begin{equation*}
    s_{\varepsilon} = \varepsilon^2 - \sum_{k=1}^d \sum_{i=1}^n (x_{ik} - p_{ik})^2.
\end{equation*}
The system of $\varepsilon$-\textit{member constraints} associated to $\widehat{p_0}$ is given by $\widehat{g}_\varepsilon(x) = 0$. We denote the corresponding algebraic set $\widehat{V}_\varepsilon := \{x \in \mathbb{C}^N : \widehat{g}_\varepsilon(x) = 0\}$.
\end{definition}

\begin{lemma}\label{lemma:codimension-one-irreducible-components}
The irreducible components of $\widehat{V}_\varepsilon$ are of dimension exactly $N-1$.
\end{lemma}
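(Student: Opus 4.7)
The plan is to recognize that $\widehat{g}_\varepsilon$ is a single polynomial in $N$ variables, so $\widehat{V}_\varepsilon$ is literally a hypersurface in $\mathbb{C}^N$. Once I verify that $\widehat{g}_\varepsilon$ is neither the zero polynomial nor a nonzero constant, Krull's principal ideal theorem (or, equivalently, the fact that $\mathbb{C}[x_1,\dots,x_N]$ is a UFD in which each irreducible factor cuts out a codimension-$1$ variety) immediately gives that every irreducible component of $\widehat{V}_\varepsilon$ has dimension exactly $N-1$.

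The argument thus reduces to two short verifications. First, to rule out the zero polynomial, I would evaluate $\widehat{g}_\varepsilon$ at $\widehat{p_0}$. By construction $\widehat{g_l}(\widehat{p_0}) = 0$ for every edge $l$, while $s_\varepsilon(\widehat{p_0}) = \varepsilon^2$, giving $\widehat{g}_\varepsilon(\widehat{p_0}) = \varepsilon^4 \neq 0$. Second, to rule out a nonzero constant, I would examine the degree-$4$ homogeneous part of $\widehat{g}_\varepsilon$. Each $\widehat{g_l}$ is quadratic and $s_\varepsilon$ is quadratic, so the degree-$4$ part of $\widehat{g}_\varepsilon$ is a sum of squares of quadratic real leading forms. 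The contribution from $s_\varepsilon^2$ alone is $(\sum_{i,k} x_{ik}^2)^2 = \|x\|^4$, while each $\widehat{g_l}^2$ contributes a square of a real polynomial and is therefore nonnegative on $\mathbb{R}^N$. Since $\|x\|^4 > 0$ for $x \neq 0$, the degree-$4$ part cannot vanish identically, so $\widehat{g}_\varepsilon$ has degree exactly $4$ and is in particular non-constant.

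The only real obstacle is confirming that the sum-of-squares construction has not accidentally collapsed, but both checks are essentially immediate: evaluation at $\widehat{p_0}$ handles nonvanishing, and the leading-form calculation handles non-constancy. With those two facts in hand, the codimension-one claim follows from standard commutative algebra applied to a single nonzero non-unit element of a polynomial ring over an algebraically closed field.
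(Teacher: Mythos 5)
Your proof is correct and reaches the paper's conclusion, but via slightly different machinery. The paper's own argument cites the bound $N - \mathrm{rank}\,f \leq \dim X \leq N-1$ (Theorem~13.4.2 of Sommese--Wampler) and simply notes that for a single nonzero polynomial this forces $\dim X = N-1$. You instead invoke Krull's principal ideal theorem (equivalently the UFD structure of $\mathbb{C}[x_1,\dots,x_N]$), which is the same underlying geometric fact --- a nonzero non-unit polynomial cuts out a pure codimension-one hypersurface --- stated in more classical commutative-algebra language, so neither approach is deeper than the other. What you add, and the paper glosses over, is the explicit verification that $\widehat{g}_\varepsilon$ is nonzero: evaluating at $\widehat{p_0}$ gives $\widehat{g}_\varepsilon(\widehat{p_0}) = \varepsilon^4 \neq 0$, which cleanly rules out the zero polynomial. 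Your separate check of non-constancy via the degree-$4$ leading form is also correct (the leading form is a real sum of squares dominated below by $\|x\|^4$, hence a nonzero polynomial), though strictly speaking it is not needed for the lemma as stated: if $\widehat{g}_\varepsilon$ were a nonzero constant then $\widehat{V}_\varepsilon$ would be empty and the claim would hold vacuously. Non-constancy does matter for the lemma's intended use (one wants a nonempty hypersurface to slice with a linear space and obtain a witness set), so flagging it is a genuine improvement over the paper's terseness.
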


\begin{proof}
By Theorem 13.4.2 of \cite{SommeseWamplerTEXT}, the possible dimensions of irreducible components $X$ of an algebraic set $V(f)$ for $f:\mathbb{C}^N \to \mathbb{C}^n$ are bounded between
\begin{equation*}
    N - \text{rank }f \leq \text{dim } X \leq N-1,
\end{equation*}
where the rank of $f$ is the dimension of the closure of its image as a map, or equivalently, the generic rank of its Jacobian. For a single, nonzero polynomial like $\widehat{g}_\varepsilon:\mathbb{C}^{N} \to \mathbb{C}$ we have that $N - 1 \leq \text{dim } X \leq N -1$.
\end{proof}

Below we will prove Theorem \ref{theorem:epsilon-local-rigidity}, which follows from Theorem 5 of \cite{Hauenstein2012}. But first we will state formally the assumptions required for the theorem. We also note that Theorem 5 of \cite{Hauenstein2012} draws on results from \cite{AubryRouillierSafeyElDin2002, RouillierRoySafeyElDin2000} and also from the 1954 paper of Seidenberg \cite{Seidenberg1954}.

\begin{assumption}\label{assumption:for-theorem-5-hauenstein}
We collect here the following list of assumptions which refer to the homotopy $H(x,\lambda,t)$ defined in Theorem \ref{theorem:hauenstein-2012-thm5} below.
\begin{enumerate}
    \item Let $N > k >0$ and $f:\mathbb{R}^N \to \mathbb{R}^{N-k}$ be a polynomial system with real coefficients, with $V \subset V(f)$ a pure $k$-dimensional algebraic set with witness set $\{f,L,W\}$.
    \item Assume that the starting solutions to $H(x,\lambda,1)=0$ are finite and nonsingular.
    \item Assume also that the number of starting solutions is equal to the maximum number of isolated solutions to $H(x,\lambda,1)=0$ as $z,\gamma,y,\alpha$ vary over $\mathbb{C}^{N-k} \times \mathbb{C} \times \mathbb{C}^N \times \mathbb{C}^{N-k+1}$. This will be true for a nonempty Zariski open set of $\mathbb{C}^{N-k} \times \mathbb{C} \times \mathbb{C}^N \times \mathbb{C}^{N-k+1}$.
    \item Assume all the solution paths defined by $H$ starting at $t=1$ are trackable. This means that for each starting solution $(x^*,\lambda^*)$ there exists a smooth map $\xi:(0,1] \to \mathbb{C}^N \times \mathbb{C}^{N-k+1}$ with $\xi(1) = (x^*,\lambda^*)$ and for all $t \in (0,1]$ we have $\xi(t)$ is a nonsingular solution of $H(x,\lambda,t)$.
    \item Assume that each solution path converges, collecting the endpoints of all solution paths in the sets $E$ and $E_1 = \pi(E)$ where $\pi(x,\lambda) = x$ projects onto the $x$ coordinates, forgetting the $\lambda$ coordinates. 
\end{enumerate}
\end{assumption}

\begin{theorem}[Theorem 5 of \cite{Hauenstein2012}]\label{theorem:hauenstein-2012-thm5}
Suppose that the conditions in Assumption \ref{assumption:for-theorem-5-hauenstein} hold. Let $z \in \mathbb{R}^{N-k}$, $\gamma \in \mathbb{C}$, $y \in \mathbb{R}^N - V_{\mathbb{R}}(f)$, $\alpha \in \mathbb{C}^{N - k + 1}$, and $H:\mathbb{C}^N \times \mathbb{C}^{N-k+1} \times \mathbb{C} \to \mathbb{C}^{2N-k+1}$ be the homotopy defined by
\begin{equation}
    H(x,\lambda,t) = \left[ \begin{array}{c}
        f(x) - t \gamma z  \\
        \lambda_0(x-y) + \lambda_1 \nabla f_1(x)^T + \cdots + \lambda_{N-k} \nabla f_{N-k}(x)^T \\
        \alpha^T \lambda - 1
    \end{array} \right]
\end{equation}
where $f(x) = [f_1(x), \dots, f_{N-k}(x) ]^T$. Then
\begin{equation*}
    E_1 \cap V \cap \mathbb{R}^N
\end{equation*}
contains a point on each connected component of $V_\mathbb{R}(f)$ contained in $V$.
\end{theorem}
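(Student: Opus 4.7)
The plan is to follow the classical Seidenberg–Aubry–Rouillier–Safey El Din strategy: find at least one point on every connected component of the real variety by computing the real critical points of a squared-distance function from a sufficiently generic exterior point. The homotopy $H$ is engineered precisely so that its $t=0$ endpoints contain all such critical points, while its $t=1$ solutions form a known, finite, and generically square start system obtained by perturbing $f$ by $\gamma z$.

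First I would unpack the geometric meaning of the Lagrange rows of $H$. For $y \notin V_\mathbb{R}(f)$, any local minimum of the squared distance $\|x-y\|^2$ restricted to $V_\mathbb{R}(f)$ is a critical point, and by the usual Lagrange criterion there exists a nonzero $(\lambda_0,\lambda_1,\dots,\lambda_{N-k})$ with
\[
\lambda_0 (x-y) + \sum_{i=1}^{N-k} \lambda_i \nabla f_i(x)^T = 0,
\]
and this condition still makes intrinsic sense at singular points. The normalization $\alpha^T\lambda=1$ with generic $\alpha$ selects exactly one representative of each projective $\lambda$-line, so the zero locus of the last two blocks of $H(x,\lambda,0)$, intersected with $V_\mathbb{R}(f)$, contains every real critical point of $\|x-y\|^2$ on $V_\mathbb{R}(f)$. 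The classical real-geometric input I would then quote from \cite{Seidenberg1954, RouillierRoySafeyElDin2000, AubryRouillierSafeyElDin2002} is that for generic $y$ each connected component of $V_\mathbb{R}(f)$ meets this critical set: bounded components because $\|x-y\|^2$ attains its minimum, unbounded ones by a Sard–Bertini-type argument on the distance function restricted to $V$.

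Next I would analyze the $t=1$ start system. Replacing $f(x)=0$ by $f(x)=\gamma z$ for generic $\gamma$ and real $z$ perturbs onto a smooth complex level set; the Lagrange block together with $\alpha^T\lambda=1$ then forms a square, zero-dimensional system whose finitely many nonsingular solutions can be enumerated. Parts (2)–(3) of Assumption~\ref{assumption:for-theorem-5-hauenstein} guarantee that the number of start solutions matches the generic count, and the witness set $\{f,L,W\}$ for the chosen irreducible piece $V$ is the device that lets one actually produce these start points (this is where $V$ rather than the entire $V(f)$ enters). Then, applying the parameter-homotopy framework of \cite[Ch.~7--10]{SommeseWamplerTEXT} under parts (4)–(5) of the assumption, every isolated endpoint at $t=0$ is the limit of some tracked path, so in particular every real critical point described above appears in $E$; projecting via $\pi$ places it in $E_1$, and hence $E_1 \cap V \cap \mathbb{R}^N$ meets every connected component of $V_\mathbb{R}(f)$ contained in $V$.

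The hard part is the endpoint step: one must rule out that a real critical point sits on a positive-dimensional component of $H(\cdot,\cdot,0)=0$ that no path reaches, or that a path diverges before converging. Assumption~\ref{assumption:for-theorem-5-hauenstein}(4)–(5) is exactly what is needed, and the nontrivial content one must import from \cite{Hauenstein2012} is that these assumptions hold on a nonempty Zariski-open set of parameters $(z,\gamma,y,\alpha)$. I would check this either by a direct Bertini argument on the incidence variety cut out by $H$, or by appealing to the coefficient-parameter theorem of Morgan–Sommese so that failure is confined to a proper algebraic subset of parameter space, which is avoided by the generic choices in the hypotheses.
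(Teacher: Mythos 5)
The paper does not prove this statement: it is quoted directly as Theorem~5 of \cite{Hauenstein2012}, and the argument lives in that reference (which in turn builds on \cite{Seidenberg1954, RouillierRoySafeyElDin2000, AubryRouillierSafeyElDin2002}). So your proposal is being measured against an external proof, not one in this text. With that caveat, your sketch captures the right architecture: the $t=0$ system cuts out Fritz--John-type critical points of the squared distance from $y$ on $V(f)$, the row $\alpha^T\lambda = 1$ normalizes the projective multiplier, and the $t=1$ system with the $\gamma z$ perturbation provides a known finite nonsingular start system; trackability and convergence (Assumption~\ref{assumption:for-theorem-5-hauenstein}(4)--(5)) then deliver the endpoint set $E$.

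Two points need correction. The split between bounded components (attained minimum) and unbounded components (Sard--Bertini) is not the right dichotomy here: the squared-distance function to a fixed $y$ is coercive, so it attains its infimum on every nonempty closed subset of $\mathbb{R}^N$, bounded or not; genericity of $y$ is needed not to make a critical point exist on each component but to control the critical locus so the homotopy can find it. More importantly, the step ``every real critical point described above appears in $E$'' does not follow from Assumption~\ref{assumption:for-theorem-5-hauenstein}(4)--(5): a critical pair $(x^*,\lambda^*)$ may sit on a positive-dimensional component of $\{H(\cdot,\cdot,0)=0\}$ and need not be an endpoint of any tracked path. Note the theorem's conclusion is deliberately weaker --- it asserts only that $E_1$ meets each connected component of $V_\mathbb{R}(f)$ in $V$, not that it contains the particular critical point $x^*$ --- and bridging from ``each component has a critical point'' to ``each component meets $E_1$'' is exactly the technical content of \cite{Hauenstein2012} that your sketch flags but does not supply. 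Lastly, establishing that Assumption~\ref{assumption:for-theorem-5-hauenstein}(3) holds on a nonempty Zariski-open parameter set is a companion fact in \cite{Hauenstein2012}; it is not part of proving this theorem as stated, since the theorem is already conditional on that assumption.
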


\begin{theorem}\label{theorem:epsilon-local-rigidity}
Let $p_0$ be an initial configuration and $\widehat{g_\varepsilon}$ be according to Definition \ref{definition:epsilon-member-constraint-system} above. Taking $f = \widehat{g_\varepsilon}$ in Theorem \ref{theorem:hauenstein-2012-thm5}, we find that if conditions two through five in Assumption \ref{assumption:for-theorem-5-hauenstein} are met, then
\begin{equation*}
    E_1 \cap \widehat{V}_\varepsilon \cap \mathbb{R}^N = \emptyset
\end{equation*}
implies that $p_0$ is $\varepsilon$-locally rigid.
\end{theorem}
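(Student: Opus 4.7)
My approach is to argue by contrapositive: I will assume that $p_0$ is \emph{not} $\varepsilon$-locally rigid and construct a point of $E_1 \cap \widehat{V}_\varepsilon \cap \mathbb{R}^N$, contradicting the hypothesis. First I would use Definition \ref{definition:epsilon-locally-rigid} to extract a flex $\widehat{p}(t)$ of $\widehat{p_0}$ that leaves the open ball $B_\varepsilon(\widehat{p_0})$ at some time $t_1 \in [0,1]$. Since $\widehat{p}(0) = \widehat{p_0}$, the continuous function $t \mapsto \|\widehat{p}(t) - \widehat{p_0}\|$ runs from $0$ up to at least $\varepsilon$ on $[0,t_1]$, so the intermediate value theorem supplies a crossing time $t^* \in (0,t_1]$ at which $\|\widehat{p}(t^*) - \widehat{p_0}\| = \varepsilon$ exactly.

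Next I would verify that this $\widehat{p}(t^*)$ lies in $\widehat{V}_\varepsilon \cap \mathbb{R}^N$. Because $\widehat{p}(t)$ is a flex, every moving-frame member constraint vanishes: $\widehat{g_i}(\widehat{p}(t^*)) = 0$ for all $i$. By the choice of $t^*$, also $s_\varepsilon(\widehat{p}(t^*)) = \varepsilon^2 - \|\widehat{p}(t^*) - \widehat{p_0}\|^2 = 0$. Summing squares gives $\widehat{g_\varepsilon}(\widehat{p}(t^*)) = 0$, so $\widehat{p}(t^*) \in V_\mathbb{R}(\widehat{g_\varepsilon}) \subseteq \widehat{V}_\varepsilon \cap \mathbb{R}^N$. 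The sum-of-squares construction is doing the real work here: over $\mathbb{R}$, vanishing of $\widehat{g_\varepsilon}$ forces every $\widehat{g_i}$ and $s_\varepsilon$ to vanish individually, so $V_\mathbb{R}(\widehat{g_\varepsilon})$ coincides with the intersection of the configuration space and the $\varepsilon$-sphere about $\widehat{p_0}$.

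Finally I would invoke Theorem \ref{theorem:hauenstein-2012-thm5} with $f = \widehat{g_\varepsilon}\colon \mathbb{C}^N \to \mathbb{C}$, so that $N - k = 1$ and $k = N-1$. Lemma \ref{lemma:codimension-one-irreducible-components} certifies that $\widehat{V}_\varepsilon$ is pure of dimension $k$, handling assumption 1 of Assumption \ref{assumption:for-theorem-5-hauenstein}; assumptions 2--5 are granted in the hypothesis. The cited theorem then guarantees that $E_1 \cap \widehat{V}_\varepsilon \cap \mathbb{R}^N$ contains a point on each connected component of $V_\mathbb{R}(\widehat{g_\varepsilon})$ contained in $\widehat{V}_\varepsilon$. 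Since $V_\mathbb{R}(\widehat{g_\varepsilon}) \subseteq V(\widehat{g_\varepsilon}) = \widehat{V}_\varepsilon$ automatically, and $\widehat{p}(t^*)$ lies on some such component, $E_1 \cap \widehat{V}_\varepsilon \cap \mathbb{R}^N$ is forced to be nonempty, contradicting the hypothesis of Theorem \ref{theorem:epsilon-local-rigidity}.

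The substantive work is carried by the cited Theorem \ref{theorem:hauenstein-2012-thm5}, so the main obstacle for my proof is narrower: I need the bridging step that converts an escaping flex into a real zero of $\widehat{g_\varepsilon}$ and then aligns the resulting real connected component with the family of components that Theorem \ref{theorem:hauenstein-2012-thm5} is guaranteed to witness. The place where something could go subtly wrong is the pure-dimensionality hypothesis, but this is precisely what Lemma \ref{lemma:codimension-one-irreducible-components} delivers for a single nonzero polynomial, so no further dimension-theoretic calculation is required.
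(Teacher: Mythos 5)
Your proof is correct and takes essentially the same route as the paper's: invoke Theorem \ref{theorem:hauenstein-2012-thm5} with $f = \widehat{g_\varepsilon}$, use Lemma \ref{lemma:codimension-one-irreducible-components} for the pure-dimensionality hypothesis, and observe that an escaping flex must meet the $\varepsilon$-sphere and so produce a real point of $V_\mathbb{R}(\widehat{g_\varepsilon})$, which Theorem \ref{theorem:hauenstein-2012-thm5} would then have to witness. The paper packages this last step as a connectedness/separation contradiction while you use the intermediate value theorem, but these are the same topological observation; your version merely spells out the sum-of-squares step (that vanishing of $\widehat{g_\varepsilon}$ over $\mathbb{R}$ forces each $\widehat{g_i}$ and $s_\varepsilon$ to vanish) more explicitly than the paper does.
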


\begin{proof}
Take $f = \widehat{g_\varepsilon}$ and $V = \widehat{V}_\varepsilon$ in the notation of Theorem \ref{theorem:hauenstein-2012-thm5} above. We have $N - k = 1$ and by Lemma \ref{lemma:codimension-one-irreducible-components} all irreducible components are of dimension $k = N - 1$. Therefore, the first condition of Assumption \ref{assumption:for-theorem-5-hauenstein} is met.
Say $E_1 \cap \widehat{V}_\varepsilon \cap \mathbb{R}^N = \emptyset$ but $p_0$ is \textit{not} $\varepsilon$-locally rigid. Let $\widehat{p}(t)$ be a flex such that $\widehat{p}(1) \notin B_{\varepsilon}(\widehat{p_0})$ and let $P = \widehat{p}([0,1])$ be the image of $\widehat{p}$. Then
\begin{equation*}
    \Big(P \cap B_\varepsilon(\widehat{p_0}) \Big) \cup \Big(P \cap \overline{B_\varepsilon(\widehat{p_0})}^{\,c} \Big)
\end{equation*}
is a separation of $P$ contradicting the continuity of $\widehat{p}$. 
\end{proof}

Theorem \ref{theorem:epsilon-local-rigidity} suggests the following Algorithm \ref{algorithm:epsilon-local-rigidity}.

\vspace{3pt}
\begin{algorithm}[H]
    \DontPrintSemicolon
    \KwIn{Initial configuration $p_0 \in \mathbb{R}^{nd}$, edge set $E$, and choice of $\varepsilon > 0$.}
    \KwResult{Boolean $v$ which is \textit{true} if items 2, 4, and 5 of Assumption \ref{assumption:for-theorem-5-hauenstein} are satisfied. Boolean $u$ which is \textit{true} if the set $E_1 \cap \widehat{V} \cap \mathbb{R}^N$ of Theorem \ref{theorem:epsilon-local-rigidity} is empty, and false otherwise. Set $R$ which may be empty or else contains at least one point on each connected component of $V_{\mathbb{R}}(\widehat{g_\varepsilon})$.}
    
    Apply the rigid motions of Theorem \ref{theorem:setting-locations} to $p_0$ obtaining $\widehat{p_0} \in \mathbb{R}^N$ for $N = nd - \binom{d+1}{2}$.\;
    Form the systems of equations $\widehat{g_\varepsilon}$ according to Definition \ref{definition:epsilon-member-constraint-system}.\;
    Calculate a witness set $W$ for the pure $N-1$ dimensional algebraic set $V(\widehat{g_\varepsilon}) \subset \mathbb{C}^N$. \;
    Produce $z,\gamma,y,\alpha$ such that item 3 of Assumption \ref{assumption:for-theorem-5-hauenstein} holds. \;
    Use the algorithm presented in Section 2.1 of \cite{Hauenstein2012}, obtaining the boolean $v$ and the set of real solutions $R$.\;
    If $R$ is the empty set, set $u$ as true, else set $u$ as false.\;
    Output the booleans $v$ and $u$, and the set $R$.\;
    \caption{Epsilon local rigidity}
    \label{algorithm:epsilon-local-rigidity}
\end{algorithm}
\vspace{12pt}

\begin{remark}\label{remark:genericity-assumptions} An appropriate choice of $y \in \mathbb{R}^N \setminus V_{\mathbb{R}}(\widehat{g_\varepsilon})$ could be $p_0$ itself, or $p_0 + \mathcal{N}(0,\sigma^2)$ for some random multivariate Gaussian noise with mean zero and variance $\sigma^2$. In step 4 above, if items 2, 3, 4, or 5 of Assumption \ref{assumption:for-theorem-5-hauenstein} fail to hold, then generating new and random points $z,\gamma,y,\alpha$ could be required. 
\end{remark}

\begin{example}\label{example:slingshot-epsilon-locally-rigid}
As an illustrative example, we would like a configuration which we know to be locally rigid, but fails to be infinitesimally rigid so that Theorem \ref{thm:FundamentalTheorem1} does not apply, and which is also a singular point of the member constraints so that the local real dimension may differ from the local complex dimension computed in \cite{WHS2011}. As a simple example, consider a graph on nodes $[5]=\{1,2,3,4,5\}$ with edges $E = \{12,13,14,23,24,34,45 \}$. Consider the configuration in Figure \ref{fig:slingshot}.
\begin{figure}[!htb]
    \centering
    \includegraphics[width=0.25\textwidth]{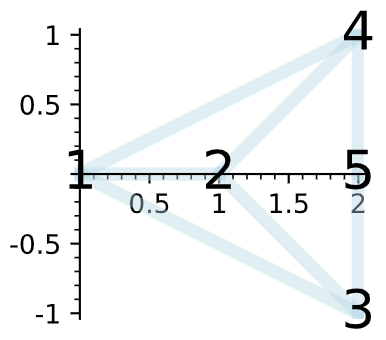} 
    \caption{}
    \label{fig:slingshot}
\end{figure}
As can be verified by calculating the nullspace of the Jacobian at generic points and at this point, this configuration is singular. Its Jacobian admits an additional infinitesimal flex, and hence Theorem \ref{thm:FundamentalTheorem1} does not apply. However, it is also simple enough that we can see it is locally rigid: Since the triangles among nodes $123$ and $124$ are both rigid, the only node that could possibly move in a continuous flex is node $5$. However, we can also see that node $5$ is restricted by node $3$ to move in one circle of radius equal to the edge length of edge $35$, while by node $4$ it is restricted to another similar circle. These circles intersect in exactly one point, the location of node $5$ in our initial configuration $p_0$. Thus $p_0$ is locally rigid. 

We implemented Algorithm \ref{algorithm:epsilon-local-rigidity} for this example using \texttt{HomotopyContinuation.jl} in \texttt{julia}, obtaining $\varepsilon$-local rigidity for $\varepsilon \in \{ 0.1, 0.01, 0.001, 0.0001 \}$. The point $y$ was generated by Gaussian noise applied to the original configuration $p_0$, then paths were tracked, obtaining zero real solutions. For each value of $\varepsilon$ all paths were trackable and the items in Assumption \ref{assumption:for-theorem-5-hauenstein} were satisfied, but none of the resulting solutions were real-valued. Therefore, we can conclude by Theorem \ref{theorem:epsilon-local-rigidity} that this configuration $p_0$ is $\varepsilon$-locally rigid for $\varepsilon \in \{ 0.1, 0.01, 0.001, 0.0001 \}$. Other choices of $\varepsilon$ can be made by another user according to their application.

\begin{figure}[!htb]
    \centering
    \includegraphics[width=0.45\textwidth]{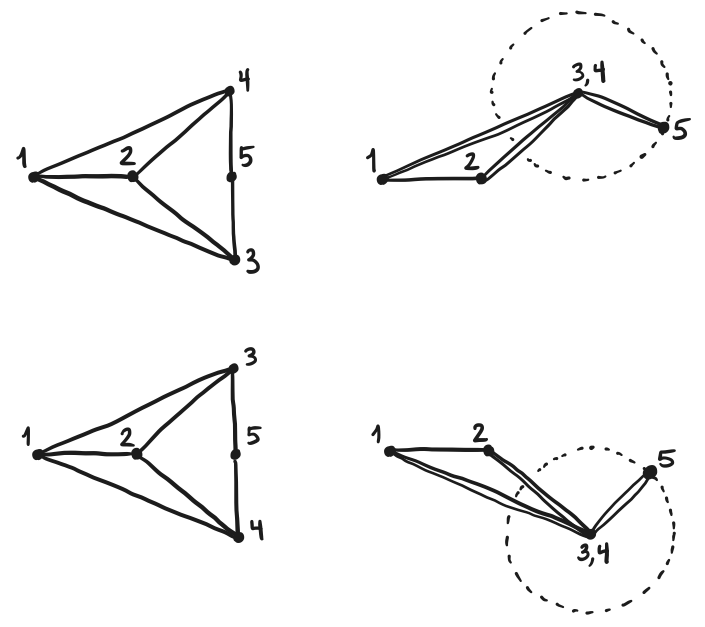}
    \caption{Several singular configurations}
    \label{fig:several-singular-slingshots}
\end{figure}

The example we just described is a singular configuration depicted in Figure \ref{fig:several-singular-slingshots} (left). Now consider the singular configuration depicted in Figure \ref{fig:several-singular-slingshots} (right). This configuration is clearly not locally rigid. However, again it is singular and admits infinitesimal flexes, hence Theorem \ref{thm:FundamentalTheorem1} does not apply. In this singular configuration, nodes $3$ and $4$ coincide, yielding a flexible node $5$ which can freely move around a circle centered at nodes $3,4$. This configuration is obviously flexible, and when we run Algorithm \ref{algorithm:epsilon-local-rigidity} we obtain exactly two real solutions (after projecting away the $\lambda$ components). Upon examination, these two real solutions correspond to node $5$ moving in either possible direction along a circle centered at nodes $3,4$, as expected. Algorithm \ref{algorithm:discrete-flex} described in Section \ref{section:producing-a-discrete-flex} may be applied to visualize this flex.
\end{example}

\begin{example}\label{example:three-prism}
Consider the 3-prism of Figure \ref{fig:possible-flex-3prism}. We have nodes $[6]=\{1,2,3,4,5,6\}$ and edges $E = \left\{ 12, 13, 14, 15, 23, 25, 26, 34, 36, 45, 46, 56 \right\}$. As mentioned previously, for generic configurations the 3-prism is infinitesimally rigid and therefore rigid. However, for the configuration $p_0 \mapsto \widehat{p_0}$
\begin{equation}\label{equation:embedding-p0-3prism}
    p_0 = \left[ \begin{array}{ccc}
        p_{11} & p_{12} & p_{13}\\ p_{21} & p_{22} & p_{23}\\ p_{31} & p_{32} & p_{33}\\ p_{41} & p_{42} & p_{43}\\ p_{51} & p_{52} & p_{53}\\ p_{61} & p_{62} & p_{63}
    \end{array} \right] = \left[ \begin{array}{ccc}
        1 & 0 & 0\\ -\frac{1}{2} & \frac{\sqrt{3}}{2} & 0\\  -\frac{1}{2} & -\frac{\sqrt{3}}{2} & 0\\ -\frac{\sqrt{3}}{2} & -\frac{1}{2} & 3\\ \frac{\sqrt{3}}{2} & -\frac{1}{2} & 3\\ 0 & 1 & 3
    \end{array} \right] \mapsto 
\left[
\begin{array}{ccc}
0.0 & 0.0 & 0.0 \\
1.7320508 & 0.0 & 0.0 \\
0.8660254 & -1.5 & 0.0 \\
1.3660254 & -1.3660254 & 3.0 \\
-0.133975 & -0.5 & 3.0 \\
1.3660254 & 0.3660254 & 3.0 \\
\end{array}
\right]
\end{equation}
there is an infinitesimal flex (right side of Figure \ref{fig:possible-flex-3prism}) which is linearly independent of the $\binom{d+1}{2}=6$ infinitesimal rigid motions. Therefore Theorem \ref{thm:FundamentalTheorem1} fails to apply.

Applying Algorithm \ref{algorithm:infinitesimal-flex-parameter-homotopy} to gather some preliminary numerical evidence for how this configuration might deform, we obtain many approximate, numerical configurations near $p_0$, including, for example
\begin{equation}\label{equation:downwards}
\left[
\begin{array}{ccc}
0.0 & 0.0 & 0.0 \\
1.734502 & 0.0 & 0.0 \\
0.868440 & -1.499228 & 0.0 \\
1.434394 & -1.322820 & 2.986758 \\
-0.127807 & -0.572257 & 2.988424 \\
1.303288 & 0.404336 & 2.986506 \\
\end{array}
\right].
\end{equation}
This suggests the configuration $p_0$ can twist downwards, since the $z$ coordinate of nodes $4,5,6$ falls from $3.0$ to approximately $2.986506$. As discussed in the next Section \ref{section:producing-a-discrete-flex}, the configurations produced by Algorithm \ref{algorithm:infinitesimal-flex-parameter-homotopy} may not correspond to exact, real-valued solutions of the system of member constraints. Without further exploration we treat them as experimental and numerical evidence for what may happen. See Remark \ref{remark:numerical-configurations-meaning} for a discussion of the possible relevance and meaning of such numerical near-solutions. However, this motivates our application of Algorithm \ref{algorithm:epsilon-local-rigidity} as an attempt to rule out these configurations as being part of a continuous flex from $p_0$ which exactly satisfies the member constraints.

Therefore we apply Algorithm \ref{algorithm:epsilon-local-rigidity} to $p_0$. A naive total degree homotopy \cite[Section 8.4.1]{SommeseWamplerTEXT} would require tracking $67,108,864$ paths. However, using mixed volumes of Newton polytopes \cite{HuberSturmfels1995PolyhedralMethodForSolvingSparsePolynomialSystems} lowers the path-tracking count to just $1,062,880$. This calculation took only a few hours on a personal computer and the \texttt{julia} code required is available at \cite{heaton-tensegrity-trusses-github}, which also includes code for an expository article \cite{H2019} which treats the 3-prism example in detail. In this calculation, we obtain $\varepsilon$-local rigidity for $\varepsilon=0.1$, even though the computed configurations like (\ref{equation:downwards}) are outside such a small $\varepsilon$-ball. Therefore, there can be no real-valued flex connecting $p_0$ and (\ref{equation:downwards}). In fact, since the 3-prism is famous, the rigidity of this initial configuration $p_0$ is well-known, thus our calculations agree with previous results.
\end{example}

\section{Producing a discrete flex}\label{section:producing-a-discrete-flex}
In this section we describe an algorithm that repeatedly solves a system of parametrized polynomial equations in order to produce a sequence of real-valued, valid configurations. If a continuous flex of $p_0$ exists, then this procedure will produce a discrete sampling of points from that continuous flex. The resulting sequence of configurations may be plotted and animated, yielding easily understandable information for the scientist. In future work, we plan to implement this algorithm in a freely available \texttt{julia} package, utilizing the existing algorithms of the \texttt{julia} package \texttt{HomotopyContinuation.jl} \cite{BT2018}, which implements polynomial homotopy continuation as discussed in Section \ref{section:preliminaries}. A main goal for our package will be ease of use. This is currently under development, but a rough example of its output is shown in Figure \ref{fig:cube-flexing}.
\begin{figure}[!htb]
    \centering
    \includegraphics[width=0.45\textwidth]{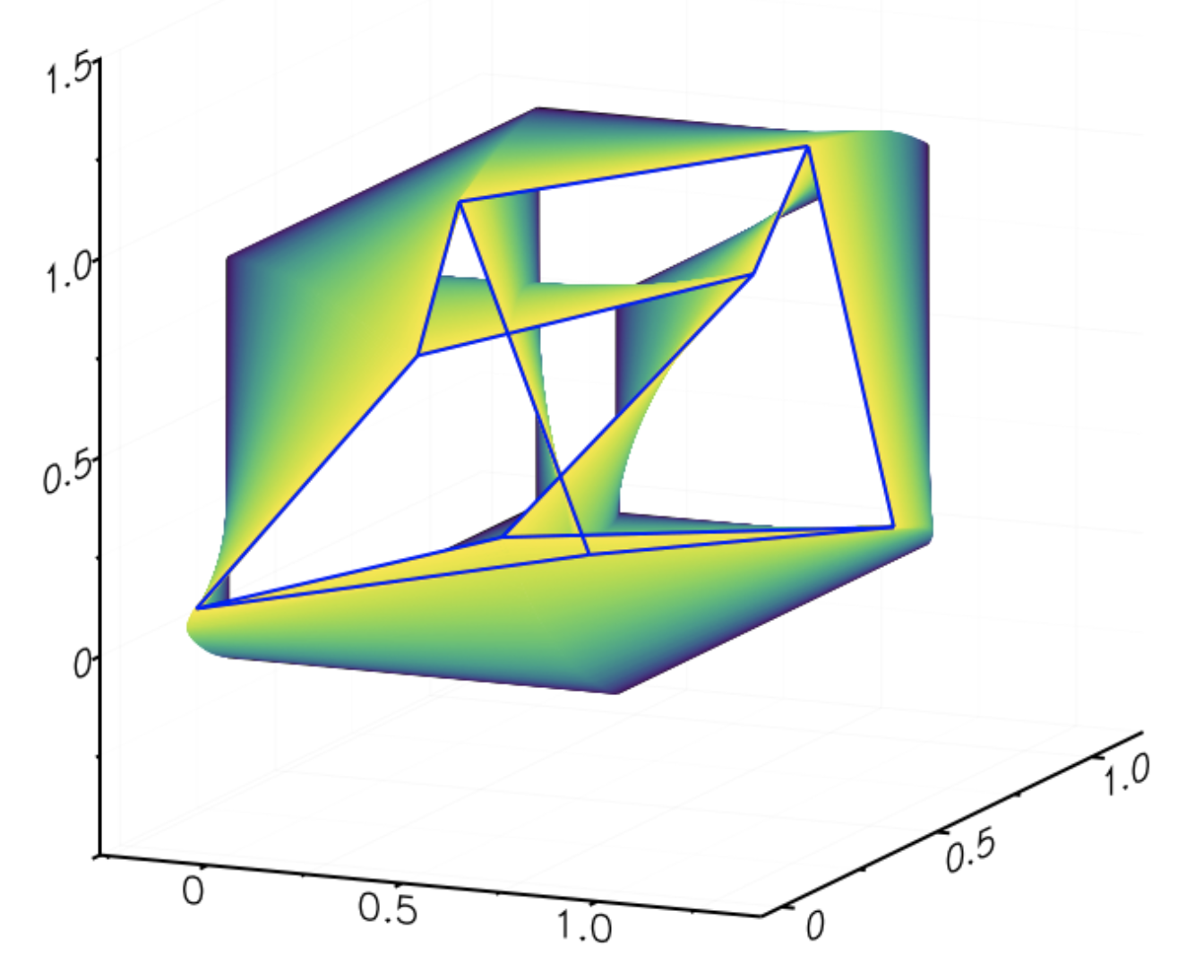}
    \caption{100 configurations of a cube flexing}
    \label{fig:cube-flexing}
\end{figure}
These are images of a discrete flex with $M=100$ points computed using homotopy continuation on a cube deforming freely. The cube is obviously flexible, and the goal is to implement software which will find more surprising flexes from other, more complicated examples. We also note that other excellent software exists for homotopy continuation, including \cite{Bertini, Hom4PS, PHCpack}.

\vspace{3pt}
\begin{algorithm}[H]
    \DontPrintSemicolon
    \KwIn{Initial configuration $p_0 \in \mathbb{R}^{nd}$, edge set $E$, choice of $\varepsilon_0 > 0$ and $M \in \mathbb{Z}_{>0}$.}
    \KwResult{A discrete flex in the form of a list $P$ of configurations $p_1,p_2,\dots,p_{M}$ to be animated and visualized, or potentially the message of $\varepsilon$-local rigidity for some $\varepsilon = j \cdot \varepsilon_0$.}
    
    Initialize a list with one element $P = [p_0]$ to be filled with more points $p_j \in \mathbb{R}^N$ as in $p_1,p_2,\dots,p_M$ if the algorithm succeeds.\;
    \For{ j in 1:M}{
        Set $\varepsilon := j \cdot \varepsilon_0$.\;
        Apply Algorithm \ref{algorithm:epsilon-local-rigidity} with inputs $p_0, E, \varepsilon$, collecting the outputs $u,v,R$.\;
        \eIf{u = 1}{
            Output the current list $P$ and the message that $p_0$ is $\varepsilon$-locally rigid.\;}
            {
            Collect the output set $R$ and store it as $R_j = R$.\;
            Set $p_{j} := \text{argmin }\, \{ \text{dist}(p_{j-1},w) \, : \, w \in R_{j} \}$. \;
            Append $p_j$ to the list $P$. \;
        }
    }
    Return the list $P$ as well as an animation of each of its $M$ configurations displayed in $\mathbb{R}^d$ if $d=2,3$.\;
    \caption{A discrete flex}
    \label{algorithm:discrete-flex}
\end{algorithm}
\vspace{12pt}

There are many possible alterations of the above algorithm, which we plan to explore in our implementation. First, the 2-homogeneous structure should be exploited in generating start systems. Second, a parameter homotopy could be used after obtaining the first new solution $p_1$. Consider the square system of equations
\begin{equation*}
    F_{y,\varepsilon} (x_1, \dots, x_N, \lambda_0, \lambda_1) = \left[ \begin{array}{c}
        \widehat{g_\varepsilon}(x)  \\
        \lambda_0 (x-y) + \lambda_1 \nabla \widehat{g_\varepsilon}\\
        \alpha_0 \lambda_0 + \alpha_1 \lambda_1 - 1 
    \end{array} \right] : \mathbb{C}^{N+2} \to \mathbb{C}^{N+2}.
\end{equation*}
After solving and finding a new configuration $p_1$ at radius $\varepsilon$ away from $p_0$, this means we have obtained solutions $(x,\lambda)$ to $F_{y,\varepsilon}$ for some specific $y \in \mathbb{R}^N$ and $\varepsilon > 0$, where one of these solutions has $x = p_1$. We could then consider the homotopies perturbing $y$ to $y^\prime$ or perturbing $\varepsilon$ to $\varepsilon^\prime$ as in
\begin{equation}
    H_y(x,\lambda,t) = F_{(1-t)y^\prime + ty,\varepsilon}(x,\lambda)
\end{equation}
or
\begin{equation}
    H_\varepsilon(x,\lambda,t) = F_{y,(1-t)\varepsilon^\prime + t\varepsilon}(x,\lambda),
\end{equation}
either of which would produce new and relevant configurations. In particular, using $H_\varepsilon$ can allow us to generate new solutions for expanding (or contracting) $\varepsilon$-balls about $p_0$, slightly modifying line 4 of Algorithm \ref{algorithm:discrete-flex}.

\begin{remark}
In the homotopies above we have removed the usual factor $\gamma$. This allows real-valued solutions like $p_1$ to stay real-valued along the parameter homotopy. This will succeed unless we cross the discriminant, so we can expect success for $|\varepsilon - \varepsilon^\prime|$ small. For small perturbations $\varepsilon$ to $\varepsilon^\prime$ we can numerically follow a continuous flex and every step of the path-tracking process will compute new and real-valued configurations sampled from that continuous flex. More precisely, starting from a non-singular solution to $F_{y,\varepsilon}$ we can continue to a non-singular solution of $F_{y,\varepsilon^\prime}$ for $|\varepsilon - \varepsilon^\prime|$ small. Even if we do cross the discriminant, it may be due to phenomena involving complex solutions elsewhere, and thus not affect our particular real-valued solution.
\end{remark}

\begin{remark}\label{remark:follow-epsilon-to-zero}
In particular, letting $\varepsilon \to 0$ we can attempt to follow any real-valued points $p_1$ on the $\varepsilon$-sphere towards $p_0$. If there is a continuous flex of $p_0$ we can expect one of the real-valued solutions on the $\varepsilon$-sphere to move towards $p_0$ as $\varepsilon \to 0$. The collection of points computed along the way are a \textit{discrete flex} of $p_0$, having been sampled from a continuous flex. We can also follow the discrete flex away from $p_0$ by letting $\varepsilon$ increase, computing points in a parameter homotopy as above.
\end{remark}

\begin{remark}\label{remark:same-connected-component}
It is tempting to use monodromy or the trace test on each of the $p_j$ to ensure they are points on the same irreducible component. However, even if they are on the same irreducible component, they could be on distinct connected components of the real algebraic set. Therefore, applying the trace test or monodromy are necessary but not sufficient conditions for our discrete flex to be sampled from the same connected component of $V_{\mathbb{R}}(\widehat{g})$. This option could be added to Algorithm \ref{algorithm:discrete-flex} explicitly, but this would require witness sets for each irreducible component of $V(\widehat{g})$ be computed, whereas here we are only assuming computation of a witness set for the pure $(N-1)$-dimensional component of $V(\widehat{g_\varepsilon})$.
\end{remark}

Here we will describe another useful application of homotopy continuation to the problem of rigidity. Presented with a new example, this would be the first algorithm to try. The output of Algorithm \ref{algorithm:infinitesimal-flex-parameter-homotopy} gives information about how the example may deform, should flexes exist, and can be applied even to much larger examples. The only requirement is that you calculate an infinitesimal flex, since Algorithm \ref{algorithm:infinitesimal-flex-parameter-homotopy} specifically searches in that direction of configuration space.

\vspace{3pt}
\begin{algorithm}[H]
    \DontPrintSemicolon
    \KwIn{Initial configuration $p_0 \in \mathbb{R}^{nd}$, edge set $E$, infinitesimal flex $v \in \mathbb{R}^{nd}$, choice of $\varepsilon_0 > 0$ and $M \in \mathbb{Z}_{>0}$.}
    \KwResult{A discrete flex in the form of a list $P$ of configurations $p_1,p_2,\dots,p_{M}$ to be animated and visualized, or potentially the message of \textit{no real solutions} for some $\varepsilon = j \cdot \varepsilon_0$.}
    
    Change coordinates to the moving frame as in Theorem \ref{theorem:setting-locations}. Let $\widehat{p_0}$ and $\widehat{v}$ be the resulting configuration and infinitesimal flex. Initialize a list with one element $P = [\widehat{p_0}]$ to be filled with more points $p_j \in \mathbb{R}^N$ as in $[\widehat{p_0},p_1,p_2,\dots,p_M]$ if the algorithm succeeds. \;
    \For{ j in 1:M}{
        Set $\varepsilon := j \cdot \varepsilon_0$, set $\ell_v = \widehat{v}^T x - \widehat{v}^T p $, and set $\ell_{v,\varepsilon} = \widehat{v}^T x - \widehat{v}^T p - \varepsilon$.\;
        Set $g$ as the polynomial system of moving frame member constraints associated to $p_0$ as in Definition \ref{definition:set-location-member-constraint-equations}.\;
        Solve the real parameter homotopy, collecting the resulting solution as $p_j$ by following the solution $p_{j-1}$ from $t=1$ to $t=0$ in
        \begin{equation*}
            h_t(x) = (1-t)\left[ \begin{array}{c}
                g \\
                \ell_{v,\epsilon}  
            \end{array} \right] + t \left[ \begin{array}{c}
                g \\
                \ell_v  
            \end{array} \right].
        \end{equation*}
        To be clear, $p_{j-1}$ solves $h_1(x)$. Follow that one solution via homotopy toward a new solution stored as $p_j$, which satisfies $h_0(x)$.\;
        \eIf{$p_j$ is complex-valued or the parameter homotopy failed}{
            Output the current list $P$ and the message that there were \textit{no real solutions} past $\varepsilon = j \cdot \varepsilon_0$.\;}
            {
            Collect the real-valued output $p_j$ and append it to the list $P$.\;
        }
    }
    Return the list $P$ as well as an animation of each of its $M$ configurations displayed in $\mathbb{R}^d$ if $d=2,3$.\;
    \caption{Infinitesimal flex parameter homotopy}
    \label{algorithm:infinitesimal-flex-parameter-homotopy}
\end{algorithm}
\vspace{12pt}

\begin{remark}\label{remark:numerical-configurations-meaning}
Here we discuss how to interpret the output of Algorithm \ref{algorithm:infinitesimal-flex-parameter-homotopy}. Although the configurations obtained in Algorithm \ref{algorithm:infinitesimal-flex-parameter-homotopy} are valid approximate solutions to the polynomial system up to standard numerical tolerances of your software, deciding whether a given point is real-valued or complex-valued is still typically done using a cutoff threshold. Therefore, the numerically computed configurations $p_1,p_2,\dots,p_M$ may truly correspond to complex-valued solutions with extremely small imaginary parts.

However, they are not without meaning. Consider Examples \ref{example:slingshot-epsilon-locally-rigid}, \ref{example:three-prism}, and \ref{example:sticky-spheres}. For Example \ref{example:slingshot-epsilon-locally-rigid}, Algorithm \ref{algorithm:infinitesimal-flex-parameter-homotopy} did not return any solutions even when we used the infinitesimal flex directions to search. This means the results correctly suggest the configuration is very rigid. For both the 3-prism and Cluster 891529 in Examples \ref{example:three-prism} and \ref{example:sticky-spheres}, whenever we used a randomly chosen vector $w$ that was not pointing in the direction of any infinitesimal flex $v$ of $p_0$, then Algorithm \ref{algorithm:infinitesimal-flex-parameter-homotopy} correctly returned no output. Only when we specifically searched toward an infinitesimal flex direction did the algorithm return real-valued solutions. Therefore, Algorithm \ref{algorithm:infinitesimal-flex-parameter-homotopy} gives valid and useful information about \textit{near-deformations}, even if those numerically computed deformations turn out to be complex-valued configurations satisfying the member constraints but with extremely small imaginary parts. This is still useful and relevant information.

If further information is required of these numerically computed configurations $p_1,p_2,\dots,p_M$, it is possible to use Smale's $\alpha$-theory to determine if the $p_i$ are so-called \textit{approximate zeros} of the given polynomial system \cite{BlumCuckerShubSmaleComplexityAndRealComputation1998, Smale1986NewtonsMethodEstimatesFromDataAtOnePoint}. Roughly, this decides whether the computed configuration is within the region of quadratic convergence of Newton's method to some \textit{exact solution} of the system of polynomials. In fact, it is also possible to decide if a given numerical solution corresponds to an exact real-valued solution. This is implemented in the software \texttt{alphaCertified}~ \cite{HauensteinSottile2012AlphaCertified}.
\end{remark}

\begin{example}\label{example:sticky-spheres}
We examine Cluster 891529 which is a so-called sticky sphere cluster. Briefly, nano and microscale particles sometimes interact only on distance ranges much smaller than their diameters, so it makes sense to model them taking this into account. For more details see \cite{Holmes-Cerfon-2017-StickySpheres}. In this realm, Cluster 891529 was shared with us by Miranda Holmes-Cerfon via personal communication. It is believed that Cluster 891529 is rigid but not second-order rigid (for the definition of second-order rigid see \cite{secondorderrigidity1980}). In particular, it is not first-order rigid (Theorem \ref{thm:FundamentalTheorem1}) since it admits infinitesimal flexes.
\begin{figure}[!htb]
    \centering
    \includegraphics[width=0.7\textwidth]{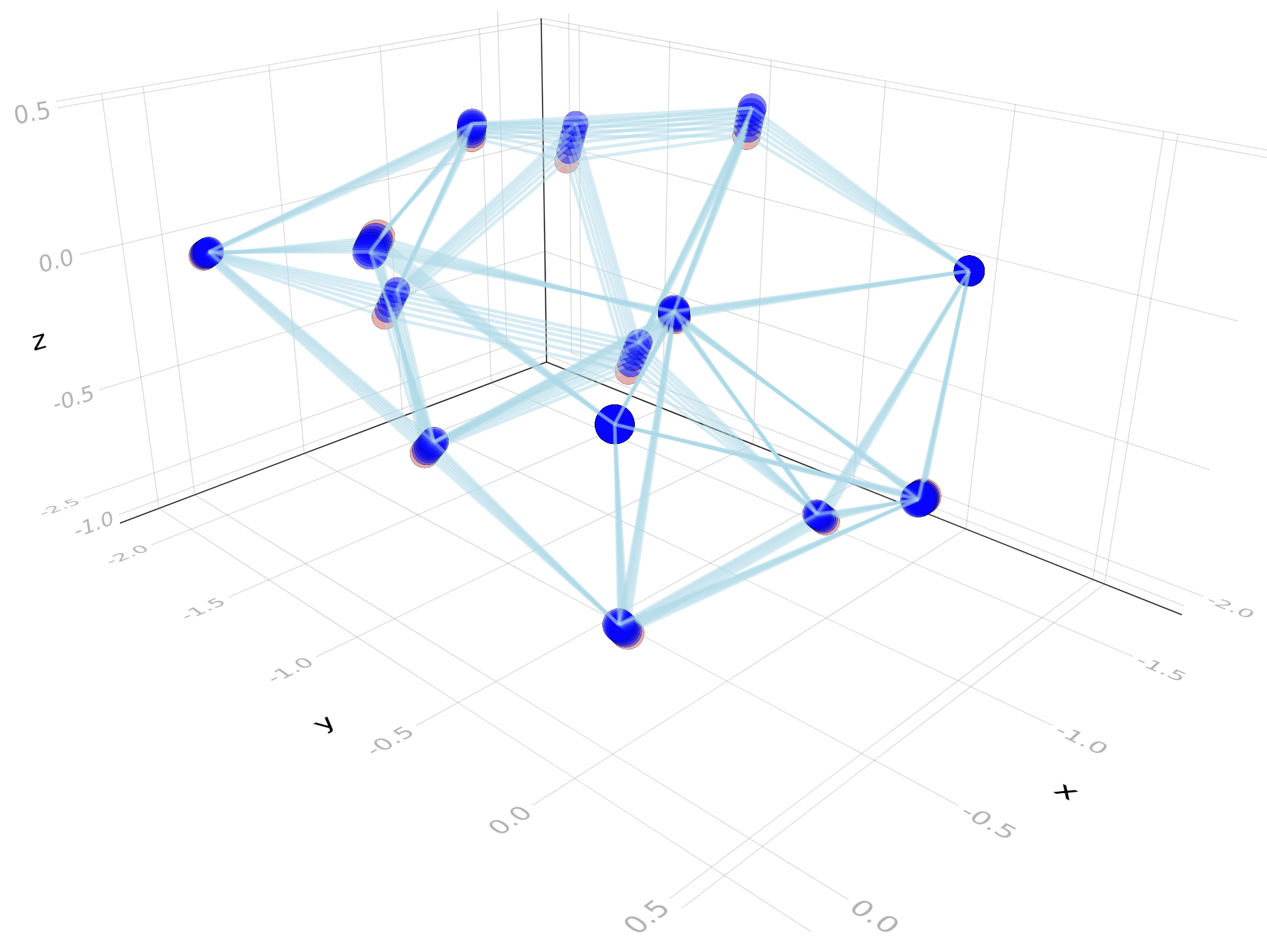}
    \caption{Algorithm \ref{algorithm:infinitesimal-flex-parameter-homotopy} applied to sticky sphere Cluster 891529}
    \label{figure:sticky-spheres}
\end{figure}
We calculated an infinitesimal flex $v$ numerically using the Jacobian of the system of member constraints, and then applied Algorithm \ref{algorithm:infinitesimal-flex-parameter-homotopy} to this initial configuration $p_0$ of Cluster 891529. The output is displayed in Figure \ref{figure:sticky-spheres}. We find experimentally that many nearby numerical configurations can be obtained using $v$, or in fact any vector that points in roughly the same direction in configuration space. This suggests that if Cluster 891529 is locally rigid, it is a singular point of of the system of member constraints of high multiplicity.
\end{example}

Unfortunately, larger examples like Cluster 891529 of Example \ref{example:sticky-spheres} are currently out of reach of Algorithm \ref{algorithm:epsilon-local-rigidity} to test for $\varepsilon$-local rigidity.
However, there is good news. \texttt{HomotopyContinuation.jl} version \texttt{2.0} will soon be finished, which will include significant changes in how polynomials are handled. In particular, instead of expanding every polynomial in the monomial basis, the inherent structure present in polynomial systems arising in applications will be preserved throughout, improving memory requirements, and the numerics of function evaluations. For example, the polynomial systems we deal with here involve many sums of squares of differences of variables. Expanding these in the monomial basis results in much higher memory requirements. When we square the entire polynomial itself, as in Definition \ref{definition:epsilon-member-constraint-system}, it is in fact much worse. There is good reason to believe this example will be within reach once this new software is finished. Nonetheless, Cluster 891529 clearly demonstrates the utility of Algorithm \ref{algorithm:infinitesimal-flex-parameter-homotopy}.

\section{Conclusion}\label{section:conclusion}

We considered bar frameworks from the perspective of numerical algebraic geometry, in particular using results from real algebraic geometry and polynomial homotopy continuation. We prove Theorem \ref{theorem:epsilon-local-rigidity} which supports Algorithm \ref{algorithm:epsilon-local-rigidity} for testing the $\varepsilon$-local rigidity of a framework. To show a framework in configuration $p_0 \in \mathbb{R}^{nd}$ is $\varepsilon$-locally rigid is to show that any continuous deformations of $p_0$ remain nearby to $p_0$ in configuration space. In particular, they do not exit a sphere of radius $\varepsilon$ about $p_0$ in configuration space. An $\varepsilon$-locally rigid framework may be locally rigid or flexible (see Figure \ref{fig:three-sphere-with-text}), but since $\varepsilon$ can be freely chosen by the user, this makes $\varepsilon$-local rigidity a useful property in any application where very small movements can be ignored. Another advantage is that $\varepsilon$-local rigidity applies to any configuration $p_0$, be it smooth or singular, generic or non-generic. Finally, we present Algorithms \ref{algorithm:discrete-flex} and \ref{algorithm:infinitesimal-flex-parameter-homotopy} which also use polynomial homotopy continuation to provide useful visual information for the scientist, even in examples that are currently too large for Algorithm \ref{algorithm:epsilon-local-rigidity}.

\bibliographystyle{plain}
\bibliography{references}

\end{document}